\documentclass[12pt,reqno]{amsart}

\usepackage{graphicx}
\usepackage{epstopdf}
\usepackage{psfrag}

\setlength{\textheight}{7.8in} \setlength{\textwidth}{6.0in}
\setlength{\columnsep}{0.5in} \setlength{\topmargin}{0.0in}
\setlength{\headheight}{0in} \setlength{\headsep}{0.5in}
\setlength{\parindent}{1pc}
\setlength{\oddsidemargin}{0in}  
\setlength{\evensidemargin}{0in}

\usepackage{amsfonts,amsmath,amssymb, amsthm}
\usepackage[usenames]{color}
\newtheorem{theorem}{Theorem}[section]
\newtheorem{proposition}[theorem]{Proposition}
\newtheorem{lemma}[theorem]{Lemma}
\newtheorem{corollary}[theorem]{Corollary}
\newtheorem{remark}[theorem]{Remark}
\newtheorem{definition}[theorem]{Definition}

\newcommand{\be}{\begin{equation}}
\newcommand{\ee}{\end{equation}}

\numberwithin{equation}{section}

\newcommand{\Ch}{{\rm Ch}}
\newcommand{\sfd}{{\sf d}}

\DeclareMathOperator{\supp}{spt}

\newcommand{\RCD}{\mathsf{RCD}}
\newcommand{\CD}{\mathsf{CD}}
\newcommand{\mm}{\mathfrak m}
\newcommand{\gopt}{{\rm{OptGeo}}}
\newcommand{\Tan}{{\rm Tan}}

\newcommand{\ppi}{{\mbox{\boldmath$\pi$}}}

\newcommand{\geo}{{\rm{Geo}}}
\newcommand{\e}{{\rm{e}}}

\newcommand{\R}{\mathbb{R}}

\renewcommand{\dim}{{\textrm{dim}}}

\renewcommand{\d}{{\textrm {d}}}
\newcommand{\Lip}{{\textrm {Lip}}}

\newcommand\N{{\mathbb N}}

\newcommand\cH{{\mathcal H}}


\hyphenation{dia-me-ter sub-ma-ni-fold in-te-re-sting mi-ni-mal}

\begin{document}

\title[On the volume measure of $\RCD^{*}(K,N)$-spaces]{On the volume measure of  non-smooth spaces with Ricci curvature bounded below}

\author {Martin Kell} \thanks{M. Kell, Universit\"at T\"ubingen,  Fachbereich Mathematik, email: \textsf{martin.kell@math.uni-tuebingen.de}}   
\author{Andrea Mondino}  \thanks{A. Mondino,  Warwick University, Department of Mathematics, email: \textsf{A.Mondino@warwick.ac.uk}}

\maketitle

\begin{abstract} 
We prove that, given an $\RCD^{*}(K,N)$-space $(X,\sfd,\mm)$, then it is possible to  $\mm$-essentially cover $X$ by measurable subsets $(R_{i})_{i\in \N}$ with the following property: for each $i$  there exists $k_{i} \in \N\cap [1,N]$ such that $\mm\llcorner R_{i}$ is absolutely continuous with respect to the $k_{i}$-dimensional Hausdorff measure.   We  also show that a Lipschitz differentiability space which is locally  bi-Lipschitz embeddable into euclidean spaces is rectifiable as a metric measure space, and we conclude with  an application to Alexandrov spaces.
  \end{abstract}

\tableofcontents

\section{Introduction}
The object of this  note  is to investigate the volume measure of non smooth spaces having Ricci curvature bounded from below in a synthetic sense, the so called $\RCD^{*}(K,N)$-spaces where $K\in \R$ stands for the lower bound on the Ricci curvature and $N\in (1,\infty)$ stands for an upper bound on the dimension.  More precisely these non-smooth objects  are metric measure spaces, i.e. triples $(X,\sfd,\mm)$ where $(X,\sfd)$ is a complete and separable metric space endowed with a Borel positive measure $\mm$, such that the optimal transportation satisfies suitable convexity properties and the Sobolev space $W^{1,2}$ is a Hilbert space (see Section \ref{SS:RCD} for the precise notions). 
Let us mention that $\RCD^{*}(K,N)$-spaces can be seen as the \emph{Ricci-curvature} counterpart of Alexandrov spaces, which are metric spaces satisfying  \emph{sectional-curvature} lower bounds in a synthetic sense.  Note that while sectional curvature lower bounds involve only the distance function, so they make perfect sense for a \emph{metric space}, on the other hand Ricci curvature lower bounds involve the interplay of distance and volume so they make sense for \emph{metric measure spaces}.

Two key properties of the $\RCD^{*}(K,N)$-condition are the compatibility with the smooth counterpart (i.e. a Riemannian manifold endowed with the Riemannian distance and volume measure is an $\RCD^{*}(K,N)$-space if and only if it has Ricci $\geq K$ and dimension $\leq N$) and the stability with respect to measured Gromov-Hausdorff convergence, mGH for short. The combination of these two properties clearly implies that Ricci-limit spaces (i.e. mGH-limits of Riemannian manifolds having Ricci $\geq K$ and dimension $\leq N$) are $\RCD^{*}(K,N)$-spaces. Notice that the class of $\RCD^{*}(K,N)$-spaces contains both \emph{collapsed} and \emph{non-collapsed}  Ricci-limits, but a priori may be larger.  Let us mention that the corresponding question is  an  open problem even in the  classical framework of Alexandrov spaces (i.e. it is not known if every $n$-dimensional Alexandrov space with curvature bounded below by $k$ is a GH-limit of a sequence of Riemannian manifolds with Sectional curvatures $\geq k$ and dimensions $\geq n$).

The structure of Ricci-limits was deeply investigated by J. Cheeger and T. H. Colding \cite{Cheeger-Colding97I}, \cite{Cheeger-Colding97II}, \cite{Cheeger-Colding97III}. They proved, among other fundamental results, that Ricci-limits  are rectifiable as metric measure spaces, i.e. if $(X,\sfd,\mm)$ is a Ricci-limit then it is possible to cover it (up a subset of $\mm$-measure zero) by charts $R_{j}$ which are bi-Lipschitz onto subsets of $\R^{k_{j}}$, the dimension $k_{j}$ possibly depending on the chart itself,  and such that $\mm\llcorner R_{j}$ is absolutely continuous with respect to the $k_{j}$-dimensional measure $\cH^{k_{j}}$. It was later proved by T. H. Colding and A. Naber \cite{CoNa} that the dimension is independent of the chart.

It is then a natural question if the same statements are true for $\RCD^{*}(K,N)$-spaces.  In a joint work of A. Naber  with the second named author \cite{MN},  it was proved that $\RCD^{*}(K,N)$-spaces are rectifiable as   \emph{metric spaces}; more precisely the following statement holds. Before stating it let us recall that the  \emph{$k$-dimensional regular set}  ${\mathcal R}_{k}$ is   made of those points having unique blow up and such a unique blow-up is euclidean of dimension $k$. More explicitly, $\bar{x}\in {\mathcal R}_{k}$ if  and only  if the sequence of  rescaled spaces $(X, r^{-1} \sfd, \mm^{\bar{x}}_{r}, \bar{x})$ converges in pointed measured Gromov Hausdorff sense to $(\R^{k}, \sfd_{\R^{k}}, c_{j} \cH^{k}, 0^{k})$ as $r \to 0^{+}$, where $c_{k}$ is the renormalization constant such that $\int_{B_{1}(0^{k})} (1-|x|) \, d (c_{k} \cH^{k})=1$ and 
$$\mm^{\bar{x}}_{r}:= \left(\int_{B_{r}(\bar{x})}  \left( 1- \frac{\sfd(\bar{x},\cdot)}{r} \right) \, d \mm \right)^{-1} \mm.$$
From Bishop-Gromov inequality, it is easily seen that ${\mathcal R}_{k}=\emptyset$ for all $k\geq \lfloor N \rfloor$, where $\lfloor N \rfloor$ denotes the integer part of $N$.

\begin{theorem}[Rectifiability of  $\RCD^{*}(K,N)$-spaces,  \cite{MN}]\label{thm:rectIntro}
Let $(X,\sfd,\mm)$ be an $\RCD^{*}(K,N)$-space for some $K\in \R, N \in (1,\infty)$. Then $\mm(X\setminus \bigcup_{k=1}^{\lfloor N \rfloor} {\mathcal R}_{k})=0$ and   every  $k$-dimensional regular set  ${\mathcal R}_{k}$ is $k$-rectifiable, i.e. it can be covered up to an $\mm$-negligible set by Borel subsets which are bi-Lipschitz to Borel subsets of $\R^{k}$.  In particular $(X,\sfd,\mm)$ is rectifiable as a \emph{metric space}.
\end{theorem}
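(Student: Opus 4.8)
The plan is to run a blow-up argument in the spirit of Cheeger--Colding, but using the first- and second-order calculus on $\RCD^{*}(K,N)$ spaces (Laplacian, heat flow, measure-valued Hessian, improved Bochner inequality) in place of the smooth computations available in the Riemannian setting. I would split it into three stages: (i) show that $\mm$-a.e.\ $x$ admits at least one tangent, i.e.\ a pointed measured Gromov--Hausdorff limit of the rescaled spaces $(X,r^{-1}\sfd,\mm^{x}_{r},x)$, isometric as a metric measure space to $(\R^{k},\sfd_{\R^{k}},c_{k}\Ha^{k},0^{k})$ for some integer $k$; (ii) at such a point, and at infinitely many scales, construct ``$\delta$-splitting maps'' $u=(u_{1},\dots,u_{k})\colon B_{r}(x)\to\R^{k}$ with sharp gradient bounds and small integrated Hessian; (iii) prove the quantitative statement that such a $u$ is $(1+\eps)$-bi-Lipschitz onto its image on a subset of $B_{r}(x)$ of almost full $\mm$-measure, and patch the resulting charts by a covering argument. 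Setting $A_{k}:=\{x: (\R^{k},\sfd_{\R^{k}},c_{k}\Ha^{k},0^{k})\in\Tan(X,\sfd,\mm,x)\}$, stage (i) gives $\mm(X\setminus\bigcup_{k}A_{k})=0$, with $A_{k}=\emptyset$ for $k>N$ by Bishop--Gromov, while stages (ii)--(iii) show each $A_{k}$ is $k$-rectifiable; the identification with ${\mathcal R}_{k}$ is carried out at the end.

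For stage (i): by rescaling and stability of the $\RCD^{*}$-condition, every element of $\Tan(X,\sfd,\mm,x)$ is an $\RCD(0,N)$ space; since for $\mm$-a.e.\ $x$ the Bishop--Gromov density is finite, positive and attained in the limit, the ``volume cone implies metric cone'' theorem of De Philippis--Gigli shows that $\mm$-a.e.\ every tangent is a metric measure cone. On such a cone I would perform a dimension reduction: if it is not already Euclidean, choose a point off the vertex, pass to a tangent there and split off a line via the $\RCD$ splitting theorem; each splitting drops the ``dimension'' by one, $N$ bounds the number of iterations, and iterated tangents are tangents (Gigli--Mondino--Rajala), so after finitely many steps one reaches some $(\R^{k},\sfd_{\R^{k}},c_{k}\Ha^{k},0^{k})$, which is therefore a tangent of $(X,\sfd,\mm)$ at $x$. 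Hence $x\in\bigcup_{k}A_{k}$ for $\mm$-a.e.\ $x$.

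For stages (ii)--(iii), fix $k$ and $x\in A_{k}$; since the measured tangent is Euclidean, along some $r_{n}\downarrow 0$ the ball $B_{r_{n}}(x)$ is $\delta_{n}$-close, measures included, to the unit ball of $\R^{k}$ with $\delta_{n}\to 0$. Using the almost-splitting theorem on $\RCD$ spaces (which rests on the existence of good cut-off functions with bounded gradient and Laplacian and on solving suitable elliptic problems), one gets $u=(u_{1},\dots,u_{k})$ with $|\nabla u_{i}|\le 1+\delta$ and
\[
\frac{1}{\mm(B_{r_{n}}(x))}\int_{B_{r_{n}}(x)}\Big(\,\big|\nabla u_{i}\cdot\nabla u_{j}-\delta_{ij}\big|+r_{n}^{2}\,\big|\mathrm{Hess}\,u_{i}\big|^{2}\,\Big)\,\d\mm\ \le\ \delta,
\]
the Hessian term being controlled via the self-improved Bochner inequality and the measure-valued Hessian of Gigli's calculus. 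The key lemma is then: for every $\eps>0$ there is $\delta=\delta(\eps,K,N)>0$ such that any $\delta$-splitting map $u$ as above is $(1+\eps)$-bi-Lipschitz on a Borel set $G\subseteq B_{r_{n}/2}(x)$ with $\mm(B_{r_{n}/2}(x)\setminus G)\le \eps\,\mm(B_{r_{n}/2}(x))$ --- the Lipschitz bound coming from $|\nabla u|\le\sqrt{k}(1+\delta)$, the lower bound from $\nabla u_{i}\cdot\nabla u_{j}\approx\delta_{ij}$ combined with a segment-inequality estimate controlling the oscillation of $u$ along $\mm$-most minimizing geodesics by the maximal function of $|\mathrm{Hess}\,u|$. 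A fine (Vitali) covering of $A_{k}$ by such sets $G$, over all admissible base points and scales (legitimate since $\RCD^{*}(K,N)$ spaces are locally doubling, so Vitali's covering theorem applies to $\mm$), then decomposes $A_{k}$ up to an $\mm$-null set into countably many Borel charts bi-Lipschitz onto Borel subsets of $\R^{k}$, so $A_{k}$ is $k$-rectifiable. Finally, the defect bound $|\nabla u_{i}\cdot\nabla u_{j}-\delta_{ij}|$ small forces (via the area formula) $\mm$ restricted to each chart to be absolutely continuous with respect to $\Ha^{k}$, while rescaling the $(1+\eps_{n})$-bi-Lipschitz maps along $r_{n}$ exhibits the blow-up as a normed space $(\R^{k},\|\cdot\|)$ whose norm must be Euclidean by infinitesimal Hilbertianity of tangents; a Lebesgue-density argument then gives $\mm$-a.e.\ uniqueness of the tangent on $A_{k}$. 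Hence $A_{k}$ coincides with ${\mathcal R}_{k}$ up to an $\mm$-null set and $\mm(A_{j}\cap A_{k})=0$ for $j\ne k$, yielding $\mm(X\setminus\bigcup_{k=1}^{\lfloor N\rfloor}{\mathcal R}_{k})=0$ together with the $k$-rectifiability of each ${\mathcal R}_{k}$.

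The main obstacle is stage (iii), and specifically the lower bound in the bi-Lipschitz lemma: one must show that for $\mm$-most pairs of points in $B_{r}(x)$ the map $u$ nearly preserves distances, which requires integrating the infinitesimal isometry defect $\nabla u_{i}\cdot\nabla u_{j}-\delta_{ij}$ along geodesics via a segment/maximal-function inequality whose constants are stable under the blow-up $r\to 0$. Upstream of this, producing splitting maps with the $L^{2}$ Hessian estimate is itself the substantial part of the work, since it must replace the smooth Bochner identity of the Riemannian case by the full second-order $\RCD$ calculus --- existence and calculus rules for the measure-valued Hessian, the improved Bakry--\'Emery/Bochner inequality, and good cut-off functions --- with all estimates holding uniformly as the scale shrinks.
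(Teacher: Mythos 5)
The paper does not actually prove this statement: Theorem \ref{thm:rectIntro} is imported verbatim from Mondino--Naber \cite{MN} (restated as Theorem \ref{RCD-rect}) and used as a black box, so there is no internal proof to compare against. Measured against the proof in \cite{MN}, your architecture (a.e.\ existence of a Euclidean tangent, then bi-Lipschitz charts on sets of almost full measure, then a Vitali covering) is the right one, but both halves follow a genuinely different route. For stage (i), \cite{MN} do \emph{not} pass through ``volume cone implies metric cone'' (the De Philippis--Gigli theorem you invoke postdates \cite{MN}, and your premise that the Bishop--Gromov density is ``finite, positive and attained in the limit'' is delicate precisely in the collapsed case $k<\lfloor N\rfloor$, where the $N$-dimensional density blows up and one must argue on the renormalized measures $\mm^x_r$); instead they show directly that at $\mm$-a.e.\ point a tangent of maximal splitting order $\R^k\times Y$ must have $Y$ a point, using Gigli's splitting theorem together with the ``iterated tangents are tangents'' theorem of Gigli--Mondino--Rajala. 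For stages (ii)--(iii), \cite{MN} do not use harmonic $\delta$-splitting maps with $L^2$-Hessian bounds (the second-order $\RCD$ calculus needed for that was not available to them); their charts are built from distance functions, and the bi-Lipschitz lower bound on a large set comes from a new maximal-function estimate on the excess along $W_2$-geodesics. Your Hessian-based route is the one later carried out by Bru\`e--Pasqualetto--Semola and is legitimate, but it is not the proof of \cite{MN}.

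One genuine over-claim: you assert that the smallness of $|\nabla u_i\cdot\nabla u_j-\delta_{ij}|$ ``forces (via the area formula) $\mm$ restricted to each chart to be absolutely continuous with respect to $\cH^k$.'' This does not follow. A bi-Lipschitz chart only transports $\mm\llcorner R_j$ to some Radon measure on $\R^k$, which a priori could be singular with respect to Lebesgue measure; there is no area formula available because $\mm$ is not assumed to be a Hausdorff measure to begin with. Ruling out this singular behaviour is exactly the content of Theorem \ref{thm:main}, i.e.\ the whole point of the present paper, and it requires the deep converse-Rademacher input of Theorem \ref{thm:DFR} (De Philippis--Rindler) combined with Alberti--Marchese and Bate's Alberti representations --- none of which appears in your sketch. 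This gap does not affect the statement of Theorem \ref{thm:rectIntro} itself, which asserts only metric rectifiability, but you should not present the absolute continuity as a byproduct of the splitting-map estimates.
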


One may wonder whether, more strongly,  $\RCD^{*}(K,N)$-spaces are rectifiable as \emph{metric measure spaces}. More precisely whether or not $\mm\llcorner {\mathcal R}_{k}$ is absolutely continuous with respect to   $\cH^{k}$. The main goal of the present paper is to answer affirmatively to such a question, that is to  prove the next result.

\begin{theorem}\label{thm:main}
Let $(X,\sfd,\mm)$ be an $\RCD^{*}(K,N)$-space for some $K\in \R, N \in (1,\infty)$ and let  ${\mathcal R}_{k}$ be  the  $k$-dimensional regular set.   Then $\mm\llcorner {\mathcal R_{k}}$ is absolutely continuous with respect to $\cH^{k}$. As a consequence,  $(X,\sfd,\mm)$ is rectifiable as  a \emph{metric measure space}.
\end{theorem}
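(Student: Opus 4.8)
The plan is to show that on each $k$-dimensional regular set ${\mathcal R}_k$ the density of $\mm$ with respect to $\cH^k$ is locally bounded above, which gives $\mm\llcorner{\mathcal R}_k\ll\cH^k$; since by Theorem \ref{thm:rectIntro} these sets $\mm$-essentially cover $X$, the metric-measure rectifiability follows. The starting point is the pointwise information encoded in the definition of ${\mathcal R}_k$: at $\bar x\in{\mathcal R}_k$ the rescaled measures $\mm^{\bar x}_r$ converge in pmGH sense to the normalized Lebesgue measure on $\R^k$. Translating this blow-up convergence into a statement about the original measure, one obtains that for $\mm$-a.e.\ $\bar x\in{\mathcal R}_k$ the ratio $\mm(B_r(\bar x))/(\omega_k r^k)$ converges as $r\to 0^+$ to a finite positive limit $\theta_k(\bar x)$, the "$k$-density" of $\mm$ at $\bar x$. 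The key fact I would isolate first is therefore: \emph{the $k$-dimensional density of $\mm$ exists, is finite and strictly positive $\mm$-a.e.\ on ${\mathcal R}_k$.} The existence of the limit (not merely of $\limsup$/$\liminf$) is exactly what the uniqueness of the blow-up in the definition of the regular set provides, and the finiteness and positivity come from the Bishop--Gromov inequality, which controls $\mm(B_r(\bar x))$ from above and below on the scales where the space looks $k$-dimensional.

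Given this, the second step is a standard density-comparison / covering argument from geometric measure theory. Fix a Borel set $A\subseteq{\mathcal R}_k$ with $\cH^k(A)=0$; I want $\mm(A)=0$. Decompose $A$ into countably many pieces $A_m$ on which $\theta_k\le m$ (possible since $\theta_k<\infty$ a.e.). On $A_m$ the upper density $\limsup_{r\to0}\mm(B_r(x))/(\omega_k r^k)\le m$ for every $x\in A_m$, and the comparison theorem between Hausdorff measure and a measure with bounded upper density (the "$\limsup$" half of the density theorem, proved via a Vitali-type covering of $A_m$ by small balls of $\mm$-nearly-maximal ratio) yields $\mm(A_m)\le C(k)\,m\,\cH^k(A_m)=0$. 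Summing over $m$ gives $\mm(A)=0$, hence $\mm\llcorner{\mathcal R}_k\ll\cH^k$. One has to be slightly careful that the covering argument is carried out inside the metric space $(X,\sfd)$ and not in a chart — but this is fine, since the comparison "$\mm\le C\,\mu_{\text{upper density bd}}\cdot\cH^k$" is a purely metric statement valid in any metric space (it does not need a Besicovitch covering theorem, only a Vitali $5r$-covering, which holds in every metric space).

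The main obstacle, and the technical heart of the paper, is the first step: upgrading the pmGH convergence of the rescaled spaces $(X,r^{-1}\sfd,\mm^{\bar x}_r,\bar x)$ to the Euclidean model into the \emph{numerical} convergence of the volume ratios $\mm(B_r(\bar x))/(\omega_k r^k)$ to a finite positive constant, uniformly enough to run the covering argument. The difficulty is that pmGH convergence of the normalized measures $\mm^{\bar x}_r$ a priori only controls integrals of the weight $(1-\sfd(\bar x,\cdot)/r)$, not the raw value $\mm(B_r(\bar x))$, and the normalization constant itself is $\big(\int_{B_r(\bar x)}(1-\sfd(\bar x,\cdot)/r)\,d\mm\big)$, so one needs a self-improving/bootstrap argument — combined with the Bishop--Gromov monotonicity — to conclude that $r\mapsto\mm(B_r(\bar x))/r^k$ genuinely stabilizes. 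I expect this to be handled by first proving convergence of $\mm(B_r(\bar x))/r^k$ along the particular subsequence coming from the blow-up, then using Bishop--Gromov monotonicity and an essentially monotone-function argument to promote it to a full limit, and finally a Fubini/Severini--Egorov argument to get a set of full measure on which everything holds simultaneously with locally uniform bounds. Everything else (the decomposition into pieces, the Vitali covering, summing the countable family) is routine.
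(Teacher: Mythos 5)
Your second step is fine: the comparison ``upper $k$-density $\le m$ on $A_m$ implies $\mm(A_m)\le C(k)\,m\,\cH^{k}(A_m)$'' is a standard covering lemma valid in arbitrary metric spaces, and it would indeed reduce the theorem to the finiteness, $\mm$-a.e.\ on ${\mathcal R}_{k}$, of $\limsup_{r\to 0}\mm(B_{r}(x))/r^{k}$. The genuine gap is in your first step: neither of the two ingredients you invoke can produce that finiteness. The pmGH convergence of the rescaled spaces $(X,r^{-1}\sfd,\mm^{\bar x}_{r},\bar x)$ to $(\R^{k},\sfd_{\R^{k}},c_{k}\cH^{k},0)$ is a statement about the \emph{normalized} measures $\mm^{\bar x}_{r}$, and the normalization is by (a quantity comparable to) $\mm(B_{r}(\bar x))$ itself; hence this convergence is scale-invariant and only controls ratios $\mm(B_{sr}(\bar x))/\mm(B_{r}(\bar x))\to s^{k}$ at comparable scales. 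This is perfectly compatible with $\mm(B_{r}(\bar x))=r^{k}L(r)$ for a slowly varying $L$ with $L(r)\to\infty$ (or $\to 0$), so no bootstrap can extract the absolute normalization $\mm(B_{r}(\bar x))\sim\theta\,r^{k}$ from it. Bishop--Gromov does not help either: it gives monotonicity of the \emph{$N$-dimensional} volume ratio $r\mapsto\mm(B_{r}(x))/v_{K,N}(r)$, which is non-increasing in $r$ and hence may tend to $+\infty$ as $r\downarrow 0$; for $k\le\lfloor N\rfloor<N$ it yields only the useless lower bound $\mm(B_{r})\gtrsim r^{N}$ and no upper bound whatsoever on $\mm(B_{r})/r^{k}$.

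In fact the existence and finiteness of the $k$-density on ${\mathcal R}_{k}$ is not an input available at this stage: it is essentially equivalent to (and in the literature was derived \emph{after}, and from) the absolute continuity $\mm\llcorner{\mathcal R}_{k}\ll\cH^{k}$ that you are trying to prove. This is exactly why the paper takes a completely different route: it uses that an $\RCD^{*}(K,N)$-space is a Lipschitz differentiability space (doubling plus Poincar\'e plus Cheeger's theorem), that by Theorem \ref{thm:rectIntro} it is locally bi-Lipschitz embeddable into Euclidean spaces, and then pushes the measure forward to $\R^{n}$, where the Alberti--Marchese decomposability bundle, Bate's Alberti representations, and the De Philippis--Rindler weak converse of Rademacher's theorem (Theorem \ref{thm:DFR}) force the pushed-forward measure to be Lebesgue absolutely continuous. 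Some input of this depth is unavoidable; your blow-up/density scheme, as described, cannot close.
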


Let us mention that the proof for Ricci-limits in \cite{Cheeger-Colding97III} was performed by getting estimates on the smooth approximating sequence and then pass to the limit; since we do not have at disposal smooth approximations of an $\RCD^{*}(K,N)$-space, our strategy is completely different and we work directly on the non-smooth space $X$ itself. More precisely, we first observe that an $\RCD^{*}(K,N)$-space is a Lipschitz differentiability space (see Section \ref{SS:LDS} for the definition and some basic properties), indeed since an $\RCD^{*}(K,N)$-space is locally doubling and satisfies a local Poincar\'e inequality (as resp. proved  by K. Bacher and K. T. Sturm \cite{BS2010}, and by T. Rajala \cite{Rajala}) the claim follows by the celebrated work of  J. Cheeger \cite{Cheeger}. In particular, since by Theorem  \ref{thm:rectIntro} we can locally embed $X$ into a euclidean space, it is clear the link with the Lipschitz differentiability spaces which are embeddable in euclidean spaces.
It was proved by  J. Cheeger \cite{Cheeger} (for embeddable PI spaces) and   G.C. David \cite{David}   that  such a space has almost everywhere unique tangent which  is moreover isometric to a euclidean space having the same dimension of the Lipschitz chart (in the sense of the Lipschitz differentiable structure).  In a rather explicit and independent way, in Section \ref{S:RadonVsLDS} we show that, more strongly,   a Lipschitz differentiability space locally  embeddable  in euclidean spaces is rectifiable as  metric measure space. Recall  that a m.m.s. $(X,\sfd,\mm)$ is \emph{locally bi-Lipschitz embeddable in euclidean spaces} if there exist Borel subsets $\{E_{j} \subset X \}_{j \in \N}$ such that each $E_{j}$ is bi-Lipschitz embeddable in some euclidean space $\R^{N_{j}}$ and  $\mm(X\setminus \bigcup_{j \in \N} E_{j})=0$. 

\begin{theorem} [Theorem \ref{cor:tangLDS}] \label{thm:main2}
Let  $(X,\sfd,\mm)$ be a Lipschitz differentiability space and assume it is locally bi-Lipschitz embeddable in euclidean spaces. Then there exists a countable collection $\{R_{j}\}_{j \in \N}$ of Borel subsets of $X$, covering $X$ up  to an $\mm$-negligible set, such that each $R_{j}$ is bi-Lipschitz to a Borel subset of $\R^{n_j}$ and $\mm \llcorner R_j$ is absolutely continuous with respect to the $n_j$-dimensional Hausdorff measure $\cH^{k_{j}}$.  In other words, $(X,\sfd,\mm)$ is rectifiable as a \emph{metric measure space}.
\end{theorem}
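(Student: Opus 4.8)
The plan is to reduce everything to a local statement on a single bi-Lipschitz chart and then to exploit the a.e.\ uniqueness of tangents that holds for embeddable Lipschitz differentiability spaces. First I would decompose $X$ up to an $\mm$-negligible set into countably many Borel pieces on each of which the Cheeger differentiable structure has a fixed dimension $n$ and a fixed chart $\varphi \colon U \to \R^{n}$, and simultaneously each piece sits inside one of the embeddable sets $E_{j} \hookrightarrow \R^{N_{j}}$. On such a piece the composition of the bi-Lipschitz embedding with the chart, together with the differentiability of the embedding coordinates with respect to the chart, forces the chart map itself to be bi-Lipschitz onto its image in $\R^{n}$ on a further Borel subset of positive measure (one uses that the differential of a Lipschitz $\R^{N_{j}}$-valued map is a well-defined $n \times N_{j}$ matrix field, and that the embedding being bi-Lipschitz prevents the full differential from being degenerate on a positive-measure set; away from the degenerate set the chart is bi-Lipschitz). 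Iterating this exhaustion argument, I obtain a countable Borel cover $\{R_{j}\}$ of $X$ up to $\mm$-null sets with each $R_{j}$ bi-Lipschitz to a Borel subset of some $\R^{n_{j}}$; this already recovers the rectifiability-as-a-metric-space part.

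The substantive point is the absolute continuity $\mm \llcorner R_{j} \ll \cH^{n_{j}}$. Here I would argue by contradiction and blow-up. Suppose $\mm \llcorner R_{j}$ is not absolutely continuous with respect to $\cH^{n_{j}}$; then there is a Borel set $A \subset R_{j}$ with $\cH^{n_{j}}(A) = 0$ but $\mm(A) > 0$. By a standard differentiation/density argument for Radon measures on a metric space carrying a bi-Lipschitz chart into $\R^{n_{j}}$, one can find a point $x_{0} \in A$ of positive $\mm$-density which is also a Lebesgue point of the chart and at which the tangent to $(X,\sfd,\mm)$ (in the pointed measured Gromov--Hausdorff sense) is unique and isometric to $\R^{n_{j}}$ with its Lebesgue measure up to a multiplicative constant — this is exactly the conclusion of Cheeger and of G.\,C.\ David on the tangents of embeddable Lipschitz differentiability spaces, which I am allowed to assume. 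I would then show that, pushing forward by the chart, the condition $\cH^{n_{j}}(A) = 0$ combined with $x_{0}$ being a density point of $A$ is incompatible with the rescaled measures $\mm_{r}^{x_{0}}$ converging to a constant multiple of $\cH^{n_{j}}$ on the tangent: the tangent measure would have to charge a set of vanishing $n_{j}$-dimensional Hausdorff measure, contradicting that it equals $c\,\mathcal{L}^{n_{j}}$. Concretely, the lower density of $\mm$ at $x_{0}$ with respect to $\cH^{n_{j}}$ can be extracted from the blow-up, and it is forced to be both positive (from $x_{0} \in \supp(\mm\llcorner A)$ with the right density) and zero (from $\cH^{n_{j}}(A)=0$), a contradiction.

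The main obstacle I anticipate is precisely making rigorous the passage from the measured Gromov--Hausdorff convergence of the rescaled spaces to quantitative control of the $\cH^{n_{j}}$-density of $\mm$ at the blow-up point, i.e.\ comparing the intrinsically defined rescaled measures $\mm_{r}^{x_{0}}$ with the normalized restrictions $\cH^{n_{j}}\llcorner B_{r}(x_{0}) / \cH^{n_{j}}(B_{r}(x_{0}))$ under the bi-Lipschitz chart, and transferring "$\cH^{n_{j}}(A)=0$'' — a statement about coverings by sets of small diameter — through the blow-up limit. The cleanest route is probably to work on the euclidean side from the outset: push $\mm$ forward by the bi-Lipschitz chart to a Radon measure $\nu$ on $\R^{n_{j}}$, note that bi-Lipschitz maps preserve $\cH^{n_{j}}$-null sets, so it suffices to prove $\nu \ll \cL^{n_{j}}$; then invoke the Besicovitch differentiation theorem for $\nu$ against $\cL^{n_{j}}$ on $\R^{n_{j}}$, whose singular part is concentrated where the upper density $\limsup_{r\to 0}\nu(B_{r}(x))/r^{n_{j}}$ is infinite, and rule this out by showing the tangent of $(X,\sfd,\mm)$ being $\R^{n_{j}}$ with finite positive density forces a finite density bound for $\nu$ at $\varphi(x_{0})$. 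Handling the distortion of the chart (which is only bi-Lipschitz, not an isometry) along the blow-up, and checking that the density bound survives the limit, is the delicate measure-theoretic heart of the argument; everything else is a countable exhaustion plus standard facts about Lipschitz differentiability spaces.
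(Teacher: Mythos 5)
Your first step (metric rectifiability) is broadly in the spirit of the paper, which obtains the bi-Lipschitz pieces from the differentiability of the euclidean coordinate functions with respect to the chart: one extracts an $n$-dimensional subspace $V_{x_0}$ and a cone condition on $\supp\mm$ near $x_0$, and then a Lusin--Egorov decomposition plus Mattila's projection lemma shows the orthogonal projection onto $V_{x_0}$ is bi-Lipschitz on each piece. Your assertion that ``away from the degenerate set the chart is bi-Lipschitz'' needs exactly this uniformization of the little-$o$ in the differentiability condition, but that is a fixable technicality.

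The absolute continuity part, however, has a fundamental gap. You propose to deduce $\nu:=\Phi_{j\,\sharp}(\mm\llcorner R_j)\ll\cL^{n_j}$ from the a.e.\ uniqueness of the pointed measured tangent via Besicovitch differentiation and a density bound at a blow-up point. This cannot work: measured tangents are defined through \emph{normalized} rescalings $\mm^{x_0}_r$, and the normalization destroys precisely the information distinguishing $\nu(B_r)\approx c\,r^{n_j}$ from, say, $\nu(B_r)\approx r^{n_j}\log(1/r)$ or worse. A Radon measure on $\R^{n}$ can have normalized blow-ups converging to normalized Lebesgue measure at a.e.\ point and still be singular; the upper density $\limsup_{r\to0}\nu(B_r(x))/r^{n}$ can be $+\infty$ on a set of full $\nu$-measure without affecting the normalized tangents, so the finite density bound you need does not ``survive the limit''. (Note also that $\cH^{n_j}(A)=0$ with $\nu(A)>0$ forces the \emph{upper} density to be $+\infty$ $\nu$-a.e.\ on $A$, not the lower density to be zero, so even the intended contradiction is misstated.) Your target is in effect a case of Cheeger's conjecture, i.e.\ a weak converse of Rademacher's theorem, which resisted exactly such soft blow-up arguments and was resolved only by the structure theorem of De Philippis--Rindler (Theorem \ref{thm:DFR} in the paper). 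The paper's actual route is Corollary \ref{cor:abscont}: after pushing forward by the bi-Lipschitz chart the identity becomes an $n_j$-dimensional Lipschitz chart, Bate's theorem yields Alberti representations in arbitrary cone directions, the Alberti--Marchese decomposability bundle is then forced to be all of $\R^{n_j}$ $\nu$-a.e., hence every Lipschitz function is $\nu$-a.e.\ differentiable, and only then does Theorem \ref{thm:DFR} give $\nu\ll\cL^{n_j}$. None of these ingredients appears in your proposal, and without Theorem \ref{thm:DFR} or an equivalent deep input the absolute continuity does not follow.
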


We briefly mention that in order to build the bi-Lipschitz charts we make use of a construction of P. Mattila  \cite{Mattila95}, and in order to control the measure we combine the work  of G. Alberti and A. Marchese \cite{AM} with the recent paper of G. De Philippis and F. Rindler \cite{DePRin} on the structure of Radon measures in euclidean spaces, and the work of D. Bate \cite{Bate} on Alberti representations of Lipschitz differentiability spaces.

We then  show that  the dimension of the chart in the Lipschitz differentiable structure of an $\RCD^{*}(K,N)$-space $X$ is the same as the dimension of the bi-Lipschitz chart in the rectifiability Theorem  \ref{thm:rectIntro} (see Proposition \ref{prop:ni=kj} and compare with \cite{CR}). At this point, Theorem \ref{thm:main} will follow from Theorem \ref{thm:main2}.
\medskip

We conclude with a couple of applications, the first one to Alexandrov geometry; to this aim let us recall that if $(X,\sfd)$ is an $n$-dimensional Alexandrov space with curvature bounded from below, then there exists an open dense set ${\mathcal R}\subset X$ with $\cH^{n}(X \setminus {\mathcal R})=0$  such that every its point has an open neighborhood bi-Lipschitz homeomorphic to an open region of $\R^{n}$ (see for instance \cite[Theorem 10.8.3 and Section 10.10.3]{BBI}).

\begin{corollary}
Let $(X,\sfd)$ be an $n$-dimensional Alexandrov space with curvature bounded from below, $n\in \N, n\geq 2$. Let  $\mm$ be  a positive Radon measure over $(X,\sfd)$ with $\supp \mm=X$, $\mm(X\setminus {\mathcal R})=0$ and such that the metric measure space $(X,\sfd,\mm)$ is an $\RCD^{*}(K,N)$-space for some $K\in \R$, $N\in (1,\infty)$ (or, more generally, $(X,\sfd,\mm)$ is a Lipschitz differentiability space). 

Then $\mm$ is absolutely continuous with respect to $\cH^{n}$. 
\end{corollary}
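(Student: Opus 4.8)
The plan is to deduce the Corollary directly from Theorem \ref{thm:main2} (equivalently Theorem \ref{thm:main}), using the special rigidity of Alexandrov geometry to rule out all dimensions different from $n$. First I would invoke Theorem \ref{thm:main2}: since $(X,\sfd,\mm)$ is assumed to be a Lipschitz differentiability space (and an $\RCD^{*}(K,N)$-space is one, as recalled in the introduction), there is a countable Borel partition, up to an $\mm$-null set, $X = \bigcup_{j} R_{j}$ with each $R_{j}$ bi-Lipschitz to a Borel subset of $\R^{n_{j}}$ and $\mm\llcorner R_{j}\ll \cH^{n_{j}}$. It thus suffices to show that for every $j$ with $\mm(R_{j})>0$ one has $n_{j}=n$; then $\mm\llcorner R_{j}\ll \cH^{n}$ for each such $j$, and summing over the countable family gives $\mm\ll\cH^{n}$ on all of $X$ (the $\mm$-null exceptional set is also $\cH^{n}$-controlled trivially, being $\mm$-null, but in fact we only need $\mm\ll\cH^n$ which follows from the $R_j$ covering $X$ up to $\mm$-measure zero).

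For the dimension identification I would use the hypothesis $\mm(X\setminus{\mathcal R})=0$ together with $\supp\mm = X$. Recall that every point of the open dense set ${\mathcal R}$ has a neighborhood bi-Lipschitz homeomorphic to an open region of $\R^{n}$. Fix $j$ with $\mm(R_{j})>0$; since $\mm(X\setminus{\mathcal R})=0$ we may assume (intersecting with ${\mathcal R}$ and passing to a smaller positive-measure Borel piece) that $R_{j}\subset{\mathcal R}$ and in fact that $R_{j}$ lies in a single Alexandrov chart $U$ bi-Lipschitz homeomorphic to an open set $V\subset\R^{n}$. Composing the two bi-Lipschitz maps, $R_{j}$ is bi-Lipschitz to a subset of $\R^{n}$ and simultaneously bi-Lipschitz to a subset of $\R^{n_{j}}$. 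A bi-Lipschitz image of a positive-$\cH^{n}$-measure set in $\R^{n}$ has positive finite $\cH^{n}$-measure; if $n_{j}\neq n$ this forces a subset of $\R^{n_{j}}$ of positive $\cH^{n}$-measure, hence (if $n_{j}<n$) $\cH^{n}=0$ on $\R^{n_{j}}$, a contradiction, or (if $n_{j}>n$) a subset of $\R^{n_{j}}$ of $\sigma$-finite $\cH^{n}$-measure which is bi-Lipschitz to an $n_{j}$-dimensional set of positive measure, again impossible since bi-Lipschitz maps distinguish Hausdorff dimension. The only remaining point is to guarantee that $R_{j}$ (or a positive-$\mm$-measure subset of it) does meet ${\mathcal R}$ with positive measure, which is immediate from $\mm(X\setminus{\mathcal R})=0$ and $\mm(R_{j})>0$.

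Alternatively, and perhaps more cleanly, one can argue via tangent spaces: by Theorem \ref{thm:rectIntro} (in the $\RCD$ case) or the Lipschitz-differentiability tangent theory, $\mm$-a.e. point of $R_{j}$ has Euclidean tangent of dimension $n_{j}$; on the other hand, for a point in an Alexandrov chart the bi-Lipschitz homeomorphism with an open set of $\R^{n}$ forces the pointed Gromov-Hausdorff tangent cone to be $n$-dimensional. Comparing, $n_{j}=n$ $\mm$-a.e. on $R_{j}$, hence $n_{j}=n$ whenever $\mm(R_{j})>0$.

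I expect the only genuinely delicate point to be the verification that the two notions of "dimension" — the bi-Lipschitz chart dimension $n_{j}$ coming from Theorem \ref{thm:main2} and the topological dimension $n$ of the Alexandrov chart — must agree on a set of positive $\mm$-measure. This is a soft argument (bi-Lipschitz invariance of Hausdorff dimension plus $\mm(X\setminus{\mathcal R})=0$, or equivalently uniqueness of tangents), but it does require knowing that ${\mathcal R}$ has full $\mm$-measure, which is exactly the standing hypothesis. Everything else — the countable partition, the absolute continuity on each piece, and the summation — is supplied verbatim by Theorem \ref{thm:main2}. For the parenthetical generality ($(X,\sfd,\mm)$ merely a Lipschitz differentiability space, not necessarily $\RCD^{*}$), no change is needed since Theorem \ref{thm:main2} is stated at that level of generality; one must only additionally check that an $n$-dimensional Alexandrov space equipped with any Radon measure of full support is indeed a Lipschitz differentiability space, which follows from the local bi-Lipschitz charts to $\R^{n}$ together with the fact that such a space is doubling and supports a Poincaré inequality (so Cheeger's theorem applies), exactly as in the $\RCD^{*}$ discussion in the introduction.
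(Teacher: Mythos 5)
Your overall strategy --- reduce to Theorem \ref{thm:main2} via the local bi-Lipschitz charts of ${\mathcal R}$, then show that every piece $R_j$ of positive $\mm$-measure has $n_j=n$ --- is the right one (the paper omits the proof precisely because it is a verbatim repetition of the proof of Theorem \ref{thm:main} in this setting), and you correctly isolate the dimension identification as the only delicate point. However, your \emph{primary} argument for that identification has a genuine gap. From $\mm\llcorner R_j\ll\cH^{n_j}$ and $\mm(R_j)>0$ one gets $\cH^{n_j}(R_j)>0$, and since $R_j$ embeds bi-Lipschitz into $\R^{n}$ this yields $n_j\le n$; so far so good, and this rules out $n_j>n$. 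But to exclude $n_j<n$ you invoke ``a bi-Lipschitz image of a positive-$\cH^{n}$-measure set in $\R^{n}$ has positive $\cH^{n}$-measure,'' which presupposes that the image of $R_j$ inside the Alexandrov chart has positive $\cH^{n}$-measure --- and that is essentially the conclusion to be proved. A priori $\mm$ could be carried by an $n_j$-rectifiable subset of the chart with $n_j<n$ (think of a measure on $\R^2$ concentrated on a dense countable union of lines), and no amount of Hausdorff-dimension bookkeeping for bi-Lipschitz maps excludes this. What excludes it is the hypothesis $\supp\mm=X$, which your first argument never uses.

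The correct route is the one you sketch only as an ``alternative,'' and it should be made the main argument. Since the Alexandrov chart $F:U\to V\subset\R^{n}$ is a bi-Lipschitz \emph{homeomorphism onto an open region} and $\supp\mm=X$, the support of $F_\sharp(\mm\llcorner U)$ is the whole open set $V$. Lemma \ref{lem:disjoint-away-from-Cone} then forces $V_{x_0}=\R^{n}$ for $\mm$-a.e.\ $x_0$, because no cone $C(V_{x_0},\theta)$ over a proper subspace with $\theta<\frac{\pi}{2}$ can contain a full ball of $V$; hence the Lipschitz chart dimension equals $n$, and Corollaries \ref{cor:id-is-chart} and \ref{cor:abscont} give $F_\sharp(\mm\llcorner U)\ll$ Lebesgue, i.e.\ $\mm\llcorner U\ll\cH^{n}$. (Equivalently: the tangent-uniqueness clause of Theorem \ref{cor:tangLDS} applied with $E_k=U$, $F_k=F$ says the tangent of $F(U)=V$ at a.e.\ point is an $n_j$-plane, while openness of $V$ forces that tangent to be all of $\R^{n}$.) Two further remarks: your appeal to Gromov--Hausdorff tangent cones of Alexandrov spaces works but imports more machinery than needed --- the measure-theoretic tangent of the open image set suffices. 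And your closing claim that one must ``additionally check'' that any full-support Radon measure on an Alexandrov space makes it a Lipschitz differentiability space is both unnecessary (in the parenthetical case this is a \emph{hypothesis}) and false as stated: an arbitrary Radon measure of full support need not be doubling nor satisfy a Poincar\'e inequality, so Cheeger's theorem does not apply to it.
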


Let us mention that related questions and results about the structure of measures satisfying Ricci curvature lower bounds have been studied by F. Cavalletti and the second author in \cite{CM}, for the specific case of Alexandrov space see \cite[Corollary 7.4]{CM}. As a final application we wish to investigate the uniqueness of measures on an $\RCD^{*}(K,N)$-space such that the resulting space is still $\RCD$ or, more generally is a Lipschitz differentiability space.
In order to state the result precisely, let us call ${\mathcal R}\subset X$ be any maximal set (with respect to inclusion)  which can be covered by Borel subsets  of $X$ bi-Lipschitz to Borel sets in euclidean spaces; by  Theorem \ref{thm:rectIntro} we know that $\mm(X \setminus {\mathcal R})=0$.

\begin{corollary}
Let $(X,\sfd,\mm)$ be an $\RCD^{*}(K,N)$-space for some $K\in \R, N \in (1,\infty)$ and let  ${\mathcal R}$ be any set  as above.

Let $\tilde{\mm}$ be another  positive Radon measure over $(X,\sfd)$ with $\supp \tilde{\mm}=X$, $\tilde{\mm}(X\setminus {\mathcal R})=0$ and such that the metric measure space $(X,\sfd, \tilde{\mm})$ is an $\RCD^{*}(\tilde{K},\tilde{N})$-space for some  $\tilde{K}\in \R, \tilde{N} \in (1,\infty)$ (or more generally $(X,\sfd, \tilde{\mm})$ is a Lipschitz differentiability space). Then $\tilde{\mm} \llcorner {\mathcal R}_{k}$ is absolutely continuous with respect to $\cH^{k}$. 
\end{corollary}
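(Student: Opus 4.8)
The plan is to reduce the statement about $\tilde{\mm}$ to Theorem~\ref{thm:main2}, using that $(X,\sfd,\tilde\mm)$ inherits enough structure from the original space. First I would observe that the bi-Lipschitz charts covering $\mathcal R$ depend only on the metric space $(X,\sfd)$, not on the reference measure; since $\mathcal R$ is, by hypothesis, covered by Borel subsets that are bi-Lipschitz to Borel subsets of euclidean spaces, the space $(X,\sfd,\tilde\mm)$ is locally bi-Lipschitz embeddable in euclidean spaces in the sense recalled before Theorem~\ref{thm:main2} (one covers $X$ $\tilde\mm$-essentially by those same charts, using $\tilde\mm(X\setminus\mathcal R)=0$). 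If moreover $(X,\sfd,\tilde\mm)$ is an $\RCD^{*}(\tilde K,\tilde N)$-space, then, exactly as for the original measure, it is locally doubling and satisfies a local Poincar\'e inequality by \cite{BS2010} and \cite{Rajala}, hence a Lipschitz differentiability space by \cite{Cheeger}; in the more general case this is assumed directly. Thus the hypotheses of Theorem~\ref{thm:main2} are met for $(X,\sfd,\tilde\mm)$.

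Applying Theorem~\ref{thm:main2} to $(X,\sfd,\tilde\mm)$ yields a countable Borel cover $\{R_j'\}_{j\in\N}$ of $X$, up to a $\tilde\mm$-negligible set, with each $R_j'$ bi-Lipschitz to a Borel subset of $\R^{n_j}$ and $\tilde\mm\llcorner R_j'$ absolutely continuous with respect to $\cH^{n_j}$. It remains to match the integers $n_j$ with the number $k$ singling out $\mathcal R_k$. The point is that the dimension of a bi-Lipschitz chart through a given point is a metric invariant: if a Borel set containing a density point of $\mathcal R_k$ (for $\tilde\mm$) is bi-Lipschitz to a subset of $\R^{n}$ of positive $\cH^n$-measure, then $n=k$. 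This can be seen either from the fact that $\mathcal R_k$ has, at $\mm$-a.e.\ point, a unique tangent cone isometric to $\R^k$ (so the local metric dimension is $k$ and cannot agree with $n\neq k$ under a bi-Lipschitz map), or, more in the spirit of the paper, from Proposition~\ref{prop:ni=kj}, which identifies the dimension of the Lipschitz-differentiability chart with the index $k$ of the regular set; and that identification used only the metric structure of $X$ together with a reference measure supported on all of $X$, so it applies verbatim to $\tilde\mm$. Consequently, on the intersection $R_j'\cap\mathcal R_k$, which carries positive $\tilde\mm$-measure only when $n_j=k$, we get $\tilde\mm\llcorner(R_j'\cap\mathcal R_k)\ll\cH^{k}$, and summing over $j$ gives $\tilde\mm\llcorner\mathcal R_k\ll\cH^k$.

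The main obstacle I anticipate is precisely the dimension-matching step: a priori the Lipschitz-differentiability dimension attached to $(X,\sfd,\tilde\mm)$ could differ from the one attached to $(X,\sfd,\mm)$, since a different measure could in principle charge different charts. The resolution is that $\mathcal R_k\subset\mathcal R$ is determined purely by the metric (via blow-ups of rescaled spaces, which do not see the measure beyond its being nontrivial near the point), together with the observation that once $\tilde\mm$ is a Radon measure of full support, the argument of Proposition~\ref{prop:ni=kj} showing that the chart dimension equals $k$ on $\mathcal R_k$ goes through unchanged. A secondary, more routine point is to be careful that $\mathcal R$ is only defined up to $\mm$-null (equivalently $\tilde\mm$-null, by $\tilde\mm(X\setminus\mathcal R)=0$) sets, so all the equalities above are to be read modulo $\tilde\mm$-negligible sets; this causes no trouble since we are proving an absolute-continuity statement for $\tilde\mm$.
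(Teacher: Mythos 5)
Your proposal is correct and follows essentially the route the paper intends (the paper omits the proof, noting it can be carried out verbatim as for Theorem \ref{thm:main}): one checks that $(X,\sfd,\tilde\mm)$ is a Lipschitz differentiability space which is locally bi-Lipschitz embeddable in euclidean spaces because the charts covering ${\mathcal R}$ are purely metric objects and $\tilde\mm(X\setminus{\mathcal R})=0$, applies Theorem \ref{cor:tangLDS}, and matches dimensions as in Proposition \ref{prop:ni=kj}. One small precision: the dimension-matching step works for $\tilde\mm$ (and not merely $\mm$-a.e.) because, by its very definition, \emph{every} point of ${\mathcal R}_{k}$ has a unique metric tangent isometric to $\R^{k}$ --- your phrasing ``at $\mm$-a.e.\ point'' would not suffice, since a $\tilde\mm$-positive set could be $\mm$-null, but the pointwise statement closes this and your argument is complete.
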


In other words, every $\RCD$ measure is forced to be in the class of measures  which are absolutely continuous with respect to the relevant Hausdorff measure of each stratum ${\mathcal R}_{k}$. 

The proof of the last two corollaries is omitted as it can be performed by  following verbatim the proof of Theorem \ref{thm:main}.
\\

After finishing this note we got to know that with a similar approach,  a related result has been proven by De Philippis, Marchese and Rindler \cite{DePMarRin}; nevertheless it is worth to point out some crucial differences. In  \cite{DePMarRin} it is proved that if $\varphi:U\to \R^{n}$, $U\subset X$ Borel subset, is a Lipschitz chart in a Lipschitz differentiability space $(X,\sfd,\mm)$ then $\varphi_{\sharp}\mm \ll \cH^{n}$; this answers positively to a conjecture of Cheeger \cite{Cheeger}. As the example of the Heisenberg group shows, in general it is not possible to conclude that $\mm \llcorner U \ll \cH^{n}$; so one cannot directly deduce our Theorem  \ref{thm:main} from \cite{DePMarRin}.
Indeed in our results, a crucial additional information (guaranteed  by Theorem \ref{thm:rectIntro}) is that the  space is locally bi-Lipschitz embeddable in  euclidean spaces, an assumption which is used throughout the paper.  Another difference from the technical point of view is that  we reduce the arguments to directly  apply the  weak converse Rademacher Theorem  \ref{thm:DFR} proved in \cite{DePRin}, while in \cite{DePMarRin} the absolute continuity of the measure is achieved by a slightly different argument involving one-dimensional currents.

Let us finally mention that a couple of weeks after our preprint appeared, Gigli and Pasqualetto \cite{GiPa} posted an independent paper proving Theorem  \ref{thm:main} with a different approach,   still relying on the 1-dimensional currents formulation of the  weak converse of Rademacher Theorem proved in \cite{DePRin}  but more analytic in nature: basically they prove that the bi-Lipschitz charts of Theorem \ref{thm:rectIntro} induce vector fields with measure-valued divergence (by using the  Laplacian Comparison Theorem), and therefore give independent $1$-dimensional currents. 

\subsection*{Acknowledgments} The authors wish to thank the anonymous referee for the careful reading and the valuable comments which improved the exposition of the paper. 

\section{Preliminaries}
\subsection{Lipschitz differentiability spaces} \label{SS:LDS} 
Throughout  the paper $(X,\sfd)$ will denote a metric space; most of  the times it will be assumed  complete, but in general it may not. A metric measure space is a triple $(X,\sfd,\mm)$, where $\mm$ is a  Borel positive measure defined over the metric space $(X,\sfd).$ When the metric (and measure) are understood from the context, we will denote such a space simply
by $X$. In order to emphasize the dependency of the metric $\sfd$ on $X$ we may also write $\sfd_{X}$. We denote open and closed balls in $(X,\sfd)$ of center $x$ and radius $r>0$ respectively by $B_{r}(x)$ and $\bar{B}_{r}(x)$.
\\Given a real valued function $f:X \to \R$, its \emph{local Lipschitz constant} at $x_{0} \in X$ is  denoted by $\Lip f(x_{0})$ and defined by 
$$
\Lip f(x_{0}):=\limsup_{x\to x_{0}} \frac{|f(x)-f(x_{0})|}{\sfd(x,x_{0})} \text{ if $x_{0}$ is not isolated,    $\quad \Lip f(x_{0})=0$ otherwise}. 
$$
Recall that if $(X,\sfd_{X})$ and $(Y,\sfd_{Y})$ are metric spaces, then a mapping $f:X\to Y$ is Lipschitz if there is a constant
$L>0$ such that
\begin{equation}\label{eq:LipMap}
\sfd_{Y}(f(x_{1}), f(x_{2})) \leq L \; \sfd_{X} (x_{1}, x_{2}),
\end{equation}
for any two points $x_{1}, x_{2} \in X$. The infimal value of $L$ such that equation  \eqref{eq:LipMap} holds is called \emph{Lipschitz constant} of $f$.
The mapping $f$  is called \emph{bi-Lipschitz} if there is a constant $L\geq 1$ such that
\begin{equation}
L^{-1} \;   \sfd_{X} (x_{1}, x_{2})\leq \sfd_{Y}(f(x_{1}), f(x_{2})) \leq L \; \sfd_{X} (x_{1}, x_{2}),
\end{equation}
for any two points $x_{1}, x_{2} \in X$. Two spaces are said to be bi-Lipschitz equivalent if there is a bi-Lipschitz map of
one onto the other.
A  non-trivial Borel regular measure $\mm$ over the metric space $(X,\sfd)$ is said to be \emph{doubling} if  there exists
a constant  $C>0$ such that
\begin{equation}\label{eq:doubling}
\mm(B_{2r}(x)) \leq C\; \mm(B_{r}(x))
\end{equation}
for all $x\in X$ and $r>0$. We say that $\mm$ is \emph{locally doubling} if for every bounded  subset $K\subset X$, the estimate \eqref{eq:doubling} holds for every $x \in K$ for some $C>0$ possibly depending on $K$.  If $\mm$ is a (resp. locally) doubling measure on the metric space $(X,\sfd)$, then $(X, \sfd)$ is a (resp. locally) doubling metric space, which means
that there exists a constant $N>0$, depending only on the doubling constant associated to $\mm$ (resp. and on the compact subset $K\subset X$), such that every
ball of radius $2r$ in $X$ (resp. centered at a point $x \in K$) can be covered by at most $N$ balls of radius $r$. 
We write $\cH^{k}$ for the  $k$-dimensional Hausdorff measure.

\begin{definition}\label{def:LDS}
A \emph{Lipschitz differentiability space} is a (possibly non-complete) metric measure space $(X,\sfd,\mm)$ satisfying the following
condition: There are positive measure sets (called \emph{charts}) $U_{i}$ covering $X$, positive integers $n_{i}$ (called  \emph{dimensions of the
charts}), and Lipschitz maps $\phi_{i}:U_{i} \to \R^{n_{i}}$ with respect to which any Lipschitz function is differentiable almost
everywhere, in the sense that for $\mm$-almost every $x\in U_{i}$, there exists a unique $df_{x}\in \R^{n_{i}}$ such that
\begin{equation}\label{eq:Differential}
\lim_{y\to x} \frac{|f(y)-f(x)-df_{x}\cdot \big(\phi_{i}(y)-\phi_{i}(x)\big)| }{\sfd(x,y)}=0.
\end{equation}
Here $df_{x}\cdot \big(\phi_{i}(y)-\phi_{i}(x)\big)$ denotes the standard scalar product between elements of $\R^{n_{i}}$.
\end{definition}

If a reference point $\bar{x}\in X$ is fixed, we will call the triple $(X,\sfd,\bar{x})$ a \emph{pointed} metric space. We now recall the notion of tangent space to a pointed metric space. 
\begin{definition}
A  pointed  metric space $(Y,\sfd_{Y}, \bar{y})$ is said to be a \emph{tangent space} to $X$ at $\bar{x}$ if there exists a sequence $r_{i}\downarrow 0$ such that the rescaled pointed spaces $(X, r_{i}^{-1} \sfd, \bar{x})$ converge to 
$(Y,\sfd_{Y}, \bar{y})$ in the pointed Gromov-Hausdorff topology.  The collection of all tangent spaces to $X$ at $\bar{x}$ is denoted by $Tan(X,\bar{x})$.
\end{definition}
By Gromov's Compactness Theorem it follows that if $(X,\sfd)$ is locally doubling then $Tan(X,\bar{x})$ is not empty. A delicate issue is instead the uniqueness of tangent spaces, which clearly in the general framework of locally doubling spaces fails but in many interesting geometric situations holds true, as we will discuss in the next sections.

Let us also recall the following localization result of Lipschitz differentiability spaces, for the proof see for instance \cite[Corollary 4.6]{Bate} or \cite[Corollary 2.7]{BateS}.

\begin{proposition}\label{prop:LocLDS}
Let $(X,\sfd,\mm)$ be a Lipschitz differentiability space and let $U\subset X$ be a subset with $\mm(U)>0$. Then $(U, \sfd|_{U\times U}, \mm \llcorner U)$ is itself a  Lipschitz differentiability space with respect to the same charts structure.
\end{proposition}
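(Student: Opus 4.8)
The final displayed statement is Proposition \ref{prop:LocLDS}: the restriction of a Lipschitz differentiability space to a positive-measure subset is again a Lipschitz differentiability space with the same chart structure. Let me sketch a proof.

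The plan is to check directly that Definition \ref{def:LDS} holds for $(U,\sfd|_{U\times U},\mm\llcorner U)$. The candidate charts are the intersections $U_i\cap U$ with the restricted maps $\phi_i|_{U_i\cap U}$. First I would note that since $\mm(U)>0$ and the $U_i$ cover $X$ up to... actually they cover $X$, so $\mm(U) = \mm(\bigcup_i (U_i\cap U)) \le \sum_i \mm(U_i\cap U)$, whence $\mm(U_i\cap U)>0$ for at least one $i$; more to the point, the sets $\{U_i\cap U\}$ cover $U$, and we may discard those of zero $\mm\llcorner U$-measure, so they still form a cover of $U$ up to an $\mm\llcorner U$-negligible set — which is all that Definition \ref{def:LDS} requires of charts. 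The maps $\phi_i|_{U_i\cap U}:U_i\cap U\to\R^{n_i}$ are Lipschitz since restrictions of Lipschitz maps are Lipschitz.

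The substantive point is the differentiability condition \eqref{eq:Differential}. Here I would observe that this is essentially automatic because differentiability is a local, pointwise notion and the metric on $U$ is inherited from $X$. Fix $i$ and let $g:U_i\cap U\to\R$ be Lipschitz (with respect to $\sfd|_{(U_i\cap U)\times(U_i\cap U)}$). By the McShane–Whitney extension theorem, $g$ extends to a Lipschitz function $\tilde g:X\to\R$ (or at least on $U_i$) with the same Lipschitz constant. By the hypothesis that $(X,\sfd,\mm)$ is a Lipschitz differentiability space with chart $(U_i,\phi_i)$, for $\mm$-a.e. $x\in U_i$ there is a unique $d\tilde g_x\in\R^{n_i}$ such that
$$\lim_{y\to x}\frac{|\tilde g(y)-\tilde g(x)-d\tilde g_x\cdot(\phi_i(y)-\phi_i(x))|}{\sfd(x,y)}=0,$$
where $y$ ranges over $X$. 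In particular the limit also holds when $y$ is restricted to range over $U_i\cap U$, and on that set $\tilde g=g$; hence for $\mm$-a.e.\ $x\in U_i\cap U$ (that is, for $\mm\llcorner U$-a.e.\ $x\in U_i\cap U$) the required limit \eqref{eq:Differential} holds for $g$ with differential $dg_x:=d\tilde g_x$. Uniqueness of $dg_x$ needs a short remark: if two vectors $v_1,v_2$ both satisfied \eqref{eq:Differential} for $g$ on $U_i\cap U$, one must rule out that they differ only because $\phi_i(y)-\phi_i(x)$ fails to probe enough directions near $x$ within $U_i\cap U$. But this is exactly governed by the "independence" of the chart: one invokes that, $\mm$-a.e.\ on $U_i$, the differential is genuinely unique — equivalently (as in Bate's and Cheeger's treatments) that $\phi_i$ admits an Alberti representation structure making the differential well-defined — and notes this property is inherited by any positive-measure subset since Alberti representations restrict. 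The cleanest route, which is the one I expect the authors take and which I would cite, is simply to quote \cite[Corollary 4.6]{Bate} or \cite[Corollary 2.7]{BateS} as already done in the statement; a self-contained argument would combine the extension argument above with the fact that the set of points of $U_i$ at which the differential is non-unique is $\mm$-null and therefore $\mm\llcorner U$-null.

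The main obstacle, and the only place requiring genuine care, is the uniqueness half of \eqref{eq:Differential}: existence is a soft consequence of Lipschitz extension plus restriction of limits, but uniqueness could a priori fail on $U_i\cap U$ even though it holds on $U_i$, if $U_i\cap U$ is so thin near some of its points that $\phi_i(y)-\phi_i(x)$ sweeps out too small a set of directions. Resolving this is what genuinely uses the differentiability-space structure (rather than just "a cover by Lipschitz charts"), and it is handled by the observation that the exceptional set of non-uniqueness points in $U_i$ is $\mm$-negligible and measure is only restricted, never created, when passing to $U$. Once that is in hand, every clause of Definition \ref{def:LDS} is verified for $(U,\sfd|_{U\times U},\mm\llcorner U)$ with charts $\{(U_i\cap U,\phi_i|_{U_i\cap U})\}_i$, which is exactly the assertion.
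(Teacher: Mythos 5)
Your overall verdict matches what the paper does: the paper offers no proof of this proposition at all, simply citing \cite[Corollary 4.6]{Bate} and \cite[Corollary 2.7]{BateS}, and your fallback of quoting those references is exactly the authors' route. You also correctly isolate the one genuinely non-trivial point, namely uniqueness of the differential for the restricted chart, and your existence argument via McShane extension is fine. However, the self-contained justification you offer for uniqueness in your closing paragraph is circular. The exceptional set you must control is
\[
B=\bigl\{x\in U_i\cap U:\ \exists\, w\neq 0 \text{ such that } w\cdot(\phi_i(y)-\phi_i(x))=o(\sfd(x,y)) \text{ as } y\to x \text{ with } y\in U_i\cap U\bigr\},
\]
whereas the set you invoke as ``$\mm$-negligible and therefore $\mm\llcorner U$-null'' is the non-uniqueness set $B'$ for the \emph{ambient} chart, where the same asymptotic is required along all of $U_i$. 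One has $B'\cap U\subset B$, not the reverse: shrinking the set over which $y$ ranges only makes the $o(\sfd(x,y))$ condition easier to satisfy, so $B$ could a priori be much larger than $B'\cap U$ --- this is precisely the ``thin subset'' scenario you yourself describe two sentences earlier. Hence ``measure is only restricted, never created'' does not close the argument; it addresses the wrong exceptional set.

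What actually closes it (and is the content of the cited corollaries) is a genuine additional input: either (i) the porosity theorem of Bate--Speight, which shows that porous sets in a Lipschitz differentiability space are $\mm$-null, whence for $\mm$-a.e.\ $x\in U_i\cap U$ the pointwise Lipschitz constant of $y\mapsto w\cdot\phi_i(y)$ computed along $U_i\cap U$ agrees with the one computed along $U_i$, the latter being positive for every $w\neq 0$ at a.e.\ point by \cite[Lemma 2.1]{BateS}; or (ii) Bate's characterization of charts via $n_i$ independent Alberti representations together with the (easy but not tautological) verification that an Alberti representation of $\mm\llcorner U_i$ restricts to one of $\mm\llcorner(U_i\cap U)$ with the same cone and speed conditions. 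You gesture at both mechanisms but prove neither; as written, the sketch asserts the conclusion of this step rather than establishing it. Since the paper itself delegates the whole proof to the references, citing them is an acceptable resolution --- but the purported ``observation'' should not be presented as a proof of the uniqueness step.
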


\subsection{$\RCD^{*}(K,N)$-spaces} \label{SS:RCD}

In this section  we quickly recall the definition of $\RCD^{*}(K,N)$-space and those properties used in the paper; for brevity we will not mention many interesting results in the field that will not be needed in the present work.

Throughout the section $(X,\sfd)$ will be a complete and separable metric space endowed with a positive Borel measure $\mm$   finite on bounded subsets.
Recall that a curve $\gamma:[0,1]\to X$ is a \emph{geodesic} if
\begin{equation}
\sfd(\gamma(s),\gamma(t))=|s-t|\; \sfd(\gamma(0), \gamma(1)).
\end{equation}
We denote  by $\geo(X)$ the space of geodesics on $(X,\sfd)$ endowed with the $\sup$ distance, and by $\e_t:\geo(X)\to X$, $t\in[0,1]$, the evaluation maps defined by $\e_t(\gamma):=\gamma_t$.
$(X,\sfd)$ is a \emph{geodesic space} if every couple of points of $X$ are joined by a geodesic.

We denote by ${\mathcal  P}(X)$ the space of Borel probability measures on $(X,\sfd)$ and by ${\mathcal  P}_{2}(X) \subset {\mathcal  P}(X)$ the subspace consisting of all the probability measures with finite second moment. For $\mu_0,\mu_1 \in {\mathcal  P}_{2}(X)$ the quadratic transportation distance $W_2(\mu_0,\mu_1)$ is defined by
\begin{equation}\label{eq:Wdef}
  W_2^2(\mu_0,\mu_1) = \inf_\pi \int_X \sfd^2(x,y) \,\d\pi(x,y),
\end{equation}
where the infimum is taken over all $\pi \in {\mathcal  P}(X \times X)$ with $\mu_0$ and $\mu_1$ as the first and the second marginal.

Assuming the space $(X,\sfd)$ to be geodesic, the space $({\mathcal  P}_{2}(X), W_2)$ is also geodesic.  It turns out that any geodesic $(\mu_t) \in \geo({\mathcal  P}_{2}(X))$ can be lifted to a measure $\ppi \in {\mathcal  P}(\geo(X))$, so that $(\e_t)_\sharp \ppi = \mu_t$ for all $t \in [0,1]$. Given $\mu_0,\mu_1\in{\mathcal  P}_{2}(X)$, we denote by $\gopt(\mu_0,\mu_1)$ the space of all
$\ppi \in {\mathcal  P}({\geo(X)})$ for which $(\e_0,\e_1)_\sharp \ppi$ realizes the minimum in \eqref{eq:Wdef}. If $(X,\sfd)$ is geodesic, then the set $\gopt(\mu_0,\mu_1)$ is non-empty for any $\mu_0,\mu_1\in{\mathcal  P}_{2}(X)$.

We turn to the formulation of the $\CD^*(K,N)$ condition, coming from  \cite{BS2010}.  Given $K \in \R$ and $N \in [1, \infty)$, we define the distortion coefficient $[0,1]\times\R^+\ni (t,\theta)\mapsto \sigma^{(t)}_{K,N}(\theta)$ as
\[
\sigma^{(t)}_{K,N}(\theta):=\left\{
\begin{array}{ll}
+\infty,&\qquad\textrm{ if }K\theta^2\geq N\pi^2,\\
\frac{\sin(t\theta\sqrt{K/N})}{\sin(\theta\sqrt{K/N})}&\qquad\textrm{ if }0<K\theta^2 <N\pi^2,\\
t&\qquad\textrm{ if }K\theta^2=0,\\
\frac{\sinh(t\theta\sqrt{K/N})}{\sinh(\theta\sqrt{K/N})}&\qquad\textrm{ if }K\theta^2 <0.
\end{array}
\right.
\]
\begin{definition}[Curvature dimension bounds]
Let $K \in \R$ and $ N\in[1,  \infty)$. We say that a m.m.s.  $(X,\sfd,\mm)$
 is a $\CD^*(K,N)$-space if for any two measures $\mu_0, \mu_1 \in {\mathcal  P}(X)$ with support  bounded and contained in $\supp \mm $ there
exists a measure $\ppi \in \gopt(\mu_0,\mu_1)$ such that for every $t \in [0,1]$
and $N' \geq  N$ we have
\begin{equation}\label{eq:CD-def}
-\int\rho_t^{1-\frac1{N'}}\,\d\mm\leq - \int \sigma^{(1-t)}_{K,N'}(\sfd(\gamma_0,\gamma_1))\rho_0^{-\frac1{N'}}+\sigma^{(t)}_{K,N'}(\sfd(\gamma_0,\gamma_1))\rho_1^{-\frac1{N'}}\,\d\ppi(\gamma), 
\end{equation}
where for any $t\in[0,1]$ we  have written $(\e_t)_\sharp\ppi=\rho_t\mm+\mu_t^s$  with $\mu_t^s \perp \mm$.
\end{definition}
Notice that if $(X,\sfd,\mm)$ is a $\CD^*(K,N)$-space, then so is $(\supp \mm,\sfd,\mm)$, hence it is not restrictive to assume that $\supp \mm=X$, a hypothesis that we shall always implicitly do from now on. 

On $\CD^*(K,N)$-spaces a natural version of the Bishop-Gromov volume growth estimate holds (see \cite[Theorem 6.2]{BS2010}), in particular a $\CD^{*}(K,N)$-space is locally doubling. 
Moreover, as proved by T. Rajala \cite{Rajala}, $\CD^{*}(K,N)$-spaces satisfy a local Poincar\'e inequality. Combining the local doubling and the Poincar\'e inequality with the celebrated work of J. Cheeger \cite{Cheeger}
 we get the following result.
 
 \begin{theorem}\label{thm:RCDLDS}
 Every $\CD^{*}(K,N)$-space is a Lipschitz differentiability space. 
 \end{theorem}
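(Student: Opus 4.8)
The plan is to verify that a $\CD^{*}(K,N)$-space satisfies the hypotheses of Cheeger's differentiability theorem \cite{Cheeger}, which asserts that a complete metric measure space whose measure is locally doubling and which supports a local (weak) $(1,p)$-Poincar\'e inequality for some $p\geq 1$ (a so-called \emph{PI space}) is a Lipschitz differentiability space in the sense of Definition \ref{def:LDS}. So the entire argument reduces to checking the two structural inputs — local doubling and a local Poincar\'e inequality — both of which are already available in the literature for $\CD^{*}(K,N)$-spaces.

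First I would recall, as noted just before the statement, that any $\CD^{*}(K,N)$-space satisfies a Bishop--Gromov type volume growth estimate; this is \cite[Theorem 6.2]{BS2010}. On a bounded set $K\subset X$ one then extracts from the Bishop--Gromov inequality a doubling constant depending only on $K$, $N$ and (via the curvature term) on the diameter of $K$: for $x\in K$ and $r$ bounded, the ratio $\mm(B_{2r}(x))/\mm(B_{r}(x))$ is controlled by the corresponding ratio of volumes of balls in the model space, which is finite. Hence $\mm$ is locally doubling, and consequently $(X,\sfd)$ is a locally doubling metric space in the sense made precise in Section \ref{SS:LDS}. Second, I would invoke T. Rajala's result \cite{Rajala} that $\CD^{*}(K,N)$-spaces support a local weak $(1,1)$-Poincar\'e inequality (hence a $(1,p)$-Poincar\'e inequality for every $p\geq 1$ by H\"older); this is proved by an explicit optimal-transport construction of test plans along $W_{2}$-geodesics, which is legitimate because $\CD^{*}(K,N)$-spaces with $\supp\mm=X$ are geodesic.

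With these two facts in hand, the conclusion is immediate: restricting if necessary to a countable exhaustion of $X$ by bounded open sets on each of which the doubling and Poincar\'e constants are uniform, Cheeger's theorem \cite{Cheeger} produces, $\mm$-a.e., countably many charts $U_{i}$ with Lipschitz coordinate maps $\phi_{i}\colon U_{i}\to\R^{n_{i}}$ relative to which every Lipschitz $f\colon X\to\R$ is differentiable in the sense of \eqref{eq:Differential}. This is precisely the assertion that $(X,\sfd,\mm)$ is a Lipschitz differentiability space. I do not foresee a genuine obstacle here: the only mild point of care is that the doubling and Poincar\'e properties are \emph{local} rather than global, so Cheeger's theorem must be applied on bounded pieces and the charts then collected — this is routine and is exactly why the localization statement Proposition \ref{prop:LocLDS} is recorded. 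The substantive mathematics (Bishop--Gromov and the Poincar\'e inequality) is entirely imported from \cite{BS2010} and \cite{Rajala}.
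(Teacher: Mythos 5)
Your proposal is correct and follows exactly the paper's argument: local doubling from the Bishop--Gromov inequality of \cite{BS2010}, the local Poincar\'e inequality from \cite{Rajala}, and then Cheeger's differentiability theorem \cite{Cheeger}, localized to bounded pieces. The paper gives no further detail beyond this same three-step reduction, so nothing is missing.
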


One crucial property of the $\CD^*(K,N)$ condition is the stability under measured Gromov-Hausdorff convergence of m.m.s., so that Ricci limit spaces are $\CD^{*}(K,N)$. Moreover, on the one hand Finsler manifolds are allowed as $\CD^{*}(K,N)$-space while on the other hand from the work of Cheeger-Colding \cite{Cheeger-Colding97I},\cite{Cheeger-Colding97II},\cite{Cheeger-Colding97III}  it was understood that purely Finsler structures never appear as Ricci limit spaces.   Inspired by this fact, in \cite{Ambrosio-Gigli-Savare11b}, Ambrosio-Gigli-Savar\'e proposed a  strengthening of the $\CD$-condition in order to enforce, in some weak sense, a Riemannian-like behavior of spaces with a curvature-dimension bound (to be precise in  \cite{Ambrosio-Gigli-Savare11b} it was analyzed the case of strong-$\CD(K,\infty)$ spaces endowed with a probability reference measure $\mm$; the axiomatization has been then simplified and generalized in  \cite{AmbrosioGigliMondinoRajala} to allow $\CD(K,\infty)$-spaces endowed with a $\sigma$-finite reference measure); the finite dimensional refinement led to the  $\RCD^{*}(K,N)$ condition. 

Such a strengthening consists in requiring that the space $(X,\sfd,\mm)$ is such that the Sobolev space $W^{1,2}(X,\sfd,\mm)$ is Hilbert, a condition we shall refer to as `infinitesimal Hilbertianity'. It is out of the scope of this note to provide full details about the definition of $W^{1,2}(X,\sfd,\mm)$ and its relevance in connection with Ricci curvature lower bounds. We will instead be satisfied by recalling the definition and a structural result proved by A. Naber and the second named author \cite{MN} which will play a key role in the present paper.

First of all recall that on a m.m.s. there is a canonical notion of `modulus of  the differential of a function' $f$, called weak upper differential and denoted with $|Df|_w$; with this object one defines the Cheeger energy
$$\Ch(f):=\frac 1 2 \int_X |Df|_w^2 \, \d \mm.$$
The  Sobolev space $W^{1,2}(X,\sfd,\mm)$ is by definition  the space of $L^2(X,\mm)$ functions having finite Cheeger energy, and it is endowed with the natural norm  $\|f\|^2_{W^{1,2}}:=\|f\|^2_{L^2}+2 \Ch(f)$ which makes it a Banach space. We remark that, in general, $W^{1,2}(X,\sfd,\mm)$ is not Hilbert (for instance, on a smooth Finsler manifold the space $W^{1,2}$ is Hilbert if and only if the manifold is actually Riemannian); in case  $W^{1,2}(X,\sfd,\mm)$ is  Hilbert then we say that $(X,\sfd,\mm)$ is \emph{infinitesimally Hilbertian}. 

\begin{definition}
An $\RCD^{*}(K,N)$-space $(X,\sfd,\mm)$ is an infinitesimally Hilbertian $\CD^*(K,N)$-space.
\end{definition}

The following structural result, proved by A. Naber and the second named author \cite{MN},   will play a key role in the present paper.

\begin{theorem}[Rectifiability of of $\RCD^{*}(K,N)$-spaces]  \label{RCD-rect}
Let $(X,\sfd,\mm)$ be an $\RCD^{*}(K,N)$-space for some $K\in \R, N \in (1,\infty)$. Then there exists a countable collection $\{R_{j}\}_{j \in \N}$ of $\mm$-measurable subsets of $X$, covering $X$ up to an $\mm$-negligible set, such that each $R_{j}$ is bi-Lipschitz to a measurable subset of $\R^{k_{j}}$, for some $k_{j}\in \N$, possibly depending on $j$. Moreover for $\mm$-a.e. $x \in R_{j}$ the tangent space is unique and isometric to $\R^{k_{j}}$.
\end{theorem}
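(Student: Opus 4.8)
The final statement to prove is Theorem \ref{RCD-rect}: that an $\RCD^{*}(K,N)$-space admits a countable bi-Lipschitz decomposition $\{R_{j}\}$ onto measurable subsets of euclidean spaces $\R^{k_{j}}$, covering $X$ up to an $\mm$-null set, with the additional property that at $\mm$-a.e.\ point of $R_{j}$ the tangent space is unique and isometric to $\R^{k_{j}}$. My strategy is to work directly with the regular sets $\mathcal{R}_{k}$ introduced in the Introduction, whose points by definition have a unique euclidean blow-up of dimension $k$. The scheme is: (i) reduce the statement to showing that each $\mathcal{R}_{k}$ can be $\mm$-essentially covered by bi-Lipschitz charts into $\R^{k}$; (ii) obtain such charts by an almost-splitting / harmonic-coordinate argument adapted to the non-smooth setting; and (iii) verify that the bi-Lipschitz estimates, together with the defining blow-up property of $\mathcal{R}_{k}$, force the tangent to be unique and isometric to $\R^{k}$ on a full-measure subset of each chart.

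\textbf{Step 1: Decomposition into regular sets.}
First I would invoke the fact (stated in the Introduction, following from Bishop--Gromov) that $\mm$-almost every point lies in some $\mathcal{R}_{k}$ with $1\le k\le \lfloor N\rfloor$, and that $\mathcal{R}_{k}=\emptyset$ for $k\ge\lfloor N\rfloor+1$. Thus it suffices to produce, for each fixed $k$, a countable family of bi-Lipschitz charts covering $\mathcal{R}_{k}$ up to an $\mm$-negligible set, since a countable union over $k$ of countable families is again countable. This reduces the theorem to a single-stratum statement.

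\textbf{Step 2: Constructing the bi-Lipschitz charts.}
On a fixed $\mathcal{R}_{k}$, I would build coordinate functions by exploiting the rigidity of the $\RCD^{*}$ structure near regular points. The natural tool is the splitting/almost-splitting theory available on $\RCD^{*}(K,N)$-spaces: near a point whose blow-up is $\R^{k}$, one produces $k$ approximately-orthogonal functions with controlled gradients (for instance harmonic functions approximating linear coordinates on the nearby euclidean factor, controlled via the Laplacian comparison and gradient estimates valid on $\RCD^{*}$-spaces). Assembling these into a map $\phi=(u_{1},\dots,u_{k}):U\to\R^{k}$, one shows, on a positive-measure subset where the approximation is uniformly good, that $\phi$ satisfies two-sided distance bounds $L^{-1}\sfd(x,y)\le|\phi(x)-\phi(y)|\le L\,\sfd(x,y)$, i.e.\ $\phi$ is bi-Lipschitz there. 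Exhausting $\mathcal{R}_{k}$ by countably many such sets (over rational scale and quality parameters, using Egorov/Lusin-type measurability arguments to extract the uniform subsets) gives the desired countable family $\{R_{j}\}$ with $R_{j}$ bi-Lipschitz onto a measurable subset of $\R^{k}$.

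\textbf{Step 3: Uniqueness and euclidean structure of tangents, and the main obstacle.}
Finally I would upgrade the pointwise blow-up information to the stated tangent-space conclusion. At $\mm$-a.e.\ $x\in R_{j}\subset\mathcal{R}_{k}$, the defining property already gives that the rescalings converge in pointed measured Gromov--Hausdorff sense to $(\R^{k},\sfd_{\R^{k}},c_{k}\cH^{k},0)$, which in particular yields uniqueness of $\Tan(X,x)$ and its isometry to $\R^{k}$. The bi-Lipschitz chart is consistent with this: the blow-up of $\phi$ at such a point is a bi-Lipschitz map of the tangent into $\R^{k}$, and the dimension $k$ of the chart matches the dimension of the blow-up. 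I expect the genuine difficulty to lie in Step 2: producing coordinate functions that are \emph{simultaneously} bi-Lipschitz \emph{and} of the correct dimension $k$, uniformly over a positive-measure set. The lower (injectivity) bound is the delicate part, since in the non-smooth setting one cannot differentiate a coordinate map and invert a Jacobian; instead one must argue via the mGH-closeness of the rescaled space to $\R^{k}$ to transfer the euclidean non-degeneracy of the model coordinates back to the space, controlling the error uniformly through a contradiction/compactness argument against the splitting theorem. Making this uniform-in-scale estimate rigorous on the entire stratum, rather than at a single point, is where the main work concentrates.
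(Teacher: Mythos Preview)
The paper does not prove Theorem~\ref{RCD-rect}. It is stated there as a known structural result due to Mondino and Naber \cite{MN} and is used as a black box: the sentence immediately preceding the statement reads ``The following structural result, proved by A.~Naber and the second named author \cite{MN}, will play a key role in the present paper.'' So there is no ``paper's own proof'' to compare your proposal against.

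That said, your outline is a reasonable high-level summary of the strategy actually carried out in \cite{MN}: one stratifies into the regular sets $\mathcal{R}_{k}$, shows $\mm\big(X\setminus\bigcup_{k}\mathcal{R}_{k}\big)=0$, and on each $\mathcal{R}_{k}$ builds bi-Lipschitz charts via an almost-splitting / $\varepsilon$-regularity argument, with the coordinate functions arising as harmonic replacements of distance functions and the quantitative estimates pushed through by mGH-compactness and the splitting theorem on $\RCD^{*}$-spaces. Your identification of the main obstacle---obtaining the lower bi-Lipschitz bound uniformly on a set of positive measure---is accurate; in \cite{MN} this is handled through a quantitative $\varepsilon$-regularity theorem combined with a maximal-function / covering argument to pass from pointwise closeness to uniform control on large subsets. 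What your sketch omits, and what constitutes the bulk of \cite{MN}, is precisely this quantitative machinery: the H\"older continuity of tangent cones along geodesics, the harmonic $\delta$-splitting maps, and the telescoping estimates that convert infinitesimal information into bi-Lipschitz bounds at a definite scale. These are substantial and cannot be filled in by soft arguments.

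For the purposes of the present paper, however, you should simply cite \cite{MN} for Theorem~\ref{RCD-rect} rather than attempt to reprove it.
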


\section{Structure of Radon measures in Euclidean spaces Vs Lipschitz differentiability spaces} \label{S:RadonVsLDS}

The goal of this section is to investigate the structure of Lipschitz differentiability spaces which can be locally bi-Lipschitz embedded in euclidean spaces. Recall  that a m.m.s. $(X,\sfd,\mm)$ is \emph{locally bi-Lipschitz embeddable in euclidean spaces} if there exist Borel subsets $\{E_{j} \subset X \}_{j \in \N}$ such that each $E_{j}$ is bi-Lipschitz embeddable in some euclidean space $\R^{N_{j}}$ and  $\mm(X\setminus \bigcup_{j \in \N} E_{j})=0$. Notice that by Proposition \ref{prop:LocLDS}, each $E_{j}$ endowed with the induced metric measure structure is a Lipschitz differentiability space which is globally bi-Lipschitz embeddable in $\R^{N_{j}}$.
    It is known from the works of J. Cheeger (for  PI-spaces globally bi-Lipschitz embeddable in some $\R^{N}$ \cite[Theorem 14.1, 14.2]{Cheeger}) and of  G.C. David  (for complete doubling Lipschitz differentiability spaces embedded in $\R^{N}$ \cite[Corollary 8.1]{David}) that such spaces have a.e. a unique tangent space which is isometric to a euclidean space of the same dimension of the  Lipschitz chart.

In the present section we prove that, more strongly,  a Lipschitz differentiability space locally bi-Lipschitz embeddable  in  euclidean spaces is rectifiable as a metric measure space. Our proof is independent on the ones mentioned above and we directly construct the rectifiability charts using the Lipschitz differentiability. We start by showing that such spaces are rectifiable as \emph{metric spaces}.

\begin{theorem}\label{thm:tangLDS}
Let  $(X,\sfd,\mm)$ be a Lipschitz differentiability space, let $(U,\phi)$ be an $n$-dimensional chart and assume that $(X,\sfd,\mm)$ is locally    bi-Lipschitz embeddable in euclidean spaces.  Then there exists a countable collection $\{R_{j}\}_{j \in \N}$ of $\mm$-measurable subsets of $X$, covering $U$ up to an $\mm$-negligible set, such that each $R_{j}$ is bi-Lipschitz to a measurable subset of $\R^{n}$. In other words $U$ is $n$-rectifiable as a \emph{metric space}.
\end{theorem}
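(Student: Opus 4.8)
The plan is to show that on an $n$-dimensional chart $(U,\phi)$ of a Lipschitz differentiability space that is bi-Lipschitz embeddable into some $\R^N$, the chart map $\phi$ itself is, after a measurable decomposition of $U$, bi-Lipschitz onto its image. By Proposition \ref{prop:LocLDS} we may and do assume $U$ itself is globally bi-Lipschitz embedded in $\R^N$, say via $\iota : U \hookrightarrow \R^N$; composing with coordinate projections, each coordinate $\iota_\ell$ is a Lipschitz function on $U$, hence differentiable $\mm$-a.e.\ with respect to the chart $\phi$, i.e.\ there is a Borel map $x \mapsto d(\iota_\ell)_x \in \R^n$ with the expansion \eqref{eq:Differential}. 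Packaging these together, the full map $\iota$ has, at $\mm$-a.e.\ $x \in U$, a ``derivative'' $D_x := (d(\iota_\ell)_x)_{\ell=1}^N$, an $N \times n$ matrix, such that $|\iota(y)-\iota(x)-D_x(\phi(y)-\phi(x))| = o(\sfd(x,y))$.

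First I would argue that $D_x$ is injective for $\mm$-a.e.\ $x$. This is the one genuinely substantive point. Since $\iota$ is bi-Lipschitz, $\sfd(x,y) \le L |\iota(y)-\iota(x)|$; combined with the differentiability expansion this forces $\sfd(x,y) \le L|D_x(\phi(y)-\phi(x))| + o(\sfd(x,y))$, so along any sequence $y_k \to x$ one gets a lower bound of the form $\liminf_k \sfd(x,y_k)/|\phi(y_k)-\phi(x)| \cdot$ (something), but this only controls directions realized by $\phi(y_k)-\phi(x)$. If $D_x$ were not injective there would be a nonzero $v \in \R^n$ in its kernel, and one would want to produce points $y_k \to x$ with $(\phi(y_k)-\phi(x))/|\phi(y_k)-\phi(x)| \to v$ to derive a contradiction with bi-Lipschitzness. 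This is precisely where one should invoke the theory of Alberti representations / the results of Bate \cite{Bate} (or alternatively the a.e.-uniqueness-of-tangents results of David \cite{David} and Cheeger \cite{Cheeger} cited just above the statement): the chart map $\phi$ has, $\mm$-a.e., ``enough directions'' — e.g.\ $\phi$ restricted to suitable fragments admits Alberti representations spanning all of $\R^n$ — so that every direction $v \in \R^n$ is indeed attained by difference quotients $\phi(y_k)-\phi(x)$ for some null-preserving family of curves through $\mm$-a.e.\ $x$. Using this, kernel vectors of $D_x$ would contradict the lower bi-Lipschitz bound, so $D_x$ is injective a.e.

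With injectivity in hand, I would do a standard Lusin-type measurable exhaustion. Decompose $U$ (up to $\mm$-null sets) into countably many Borel pieces $R_j$ on each of which: (i) $x \mapsto D_x$ is continuous and its smallest singular value is bounded below by some $\sigma_j > 0$ and its norm bounded above by some $\Lambda_j$; (ii) the $o(\sfd(x,y))$ error in the expansion is uniform, i.e.\ $|\iota(y)-\iota(x)-D_x(\phi(y)-\phi(x))| \le \eps_j \,\sfd(x,y)$ whenever $x,y \in R_j$ and $\sfd(x,y) \le \delta_j$; such a decomposition exists by Egorov's theorem and inner regularity after first restricting to a set where the differentiability limit is uniform. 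Shrinking further to pieces of diameter $< \delta_j$, on each piece one gets $|\phi(y)-\phi(x)| \asymp \sfd(x,y)$: the upper bound is Lipschitzianity of $\phi$; the lower bound follows because $\sfd(x,y) \le L|\iota(y)-\iota(x)| \le L\Lambda_j|\phi(y)-\phi(x)| + L\eps_j\sfd(x,y)$, which for $\eps_j < 1/(2L)$ gives $\sfd(x,y) \le 2L\Lambda_j|\phi(y)-\phi(x)|$. Hence $\phi|_{R_j}$ is bi-Lipschitz onto a measurable subset of $\R^n$, and the $R_j$ cover $U$ up to an $\mm$-null set, which is exactly the assertion.

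I expect the main obstacle to be step two: rigorously justifying that $D_x$ is injective, i.e.\ that the chart map genuinely ``sees'' every direction in $\R^n$ at $\mm$-a.e.\ point. The naive attempt only gives injectivity restricted to the span of realized difference quotients, and closing this gap is exactly the content that makes Lipschitz differentiability spaces subtle; I would route around it by citing Bate's structure theorem on Alberti representations of $\phi$ (equivalently the Cheeger/David uniqueness-of-tangents results recalled above), which guarantee the needed directional richness. Everything after that is routine measure theory — Egorov, inner regularity, countable exhaustion — and poses no real difficulty. (Remark: this also makes transparent why the euclidean-embeddability hypothesis is essential; for the Heisenberg group the analogous $D_x$ exists but cannot be injective, consistently with the failure of $\mm \ll \cH^n$ there.)
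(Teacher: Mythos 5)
Your argument is correct, but it builds the charts by a genuinely different route than the paper. Both proofs begin identically: reduce via Proposition \ref{prop:LocLDS} and inner regularity to a single bi-Lipschitz embedding $\iota:U\to\R^{N}$, differentiate the coordinates of $\iota$ with respect to $\phi$ to obtain an a.e.-defined linear map $D_{x}$, and run a Lusin/Egorov exhaustion to make the first-order expansion uniform on pieces. From there the paper works inside the \emph{image} of the embedding: it converts $D_{x}$ into a subspace $V_{x}\subset\R^{N}$ such that $\supp\mm$ near $x$ lies in a narrow cone around $x+V_{x}$ (Lemma \ref{lem:disjoint-away-from-Cone}), uses Lusin--Egorov to make $x\mapsto V_{x}$ continuous, and then invokes Mattila's projection lemma (Lemma \ref{lem:mattila}) to conclude that the orthogonal projection onto a fixed $V_{x_{0}}$ is bi-Lipschitz on each piece; the charts are projections of the embedded set. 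You instead show that the differentiability chart $\phi$ itself is bi-Lipschitz on each piece, via $\sfd(x,y)\le L\|\iota(y)-\iota(x)\|\le L\Lambda_{j}\|\phi(y)-\phi(x)\|+L\eps_{j}\,\sfd(x,y)$. This is shorter, avoids the cone bookkeeping and Mattila's lemma entirely, and produces charts with a more intrinsic meaning (restrictions of $\phi$); what it does not produce, and what the paper's cone construction is really set up for, is the tangent-plane information that gets reused later in Corollary \ref{cor:id-is-chart} and Corollary \ref{cor:abscont}.

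One correction to your own assessment: the injectivity of $D_{x}$, which you single out as ``the one genuinely substantive point'' and ``the main obstacle,'' is not needed for this theorem, and your final estimate silently confirms this --- the lower singular-value bound $\sigma_{j}$ never enters; only the upper bound $\Lambda_{j}\ge\|D_{x}\|$ and the lower bi-Lipschitz inequality for $\iota$ are used. So the appeal to Alberti representations and the directional-richness discussion can simply be deleted; there is no gap to close there. (Injectivity of $D_{x}$ --- equivalently, that the paper's $V_{x}$ has dimension exactly $n$ rather than at most $n$ --- does matter for the later results, and there it follows from uniqueness of the differential via \cite[Lemma 2.1]{BateS}, as you anticipated; but Theorem \ref{thm:tangLDS} only asserts that the pieces are bi-Lipschitz to subsets of $\R^{n}$, a statement that is weaker, not stronger, for larger $n$.)
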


For $w\in\mathbb{S}^{N-1}$ and $\theta\in[0,\frac{\pi}{2}]$
the (closed) cone of width $\theta$ centered at $w$ is the set 
\[
C(w,\theta)=\{v\in\mathbb{R}^{N}\,|\, v\cdot w\ge\cos(\theta)\|v\|\}.
\]
If $V$ is a linear subspace of $\R^{N}$, we define $C(V,\theta)$ as the union of $C(w,\theta)$, $w\in V \cap \mathbb{S}^{N-1}$.

\begin{lemma}\label{lem:disjoint-away-from-Cone}
Let $\mm$ be a measure in $\R^{N}$ such that $(\supp \mm, |\cdot|_{\R^{N}},\mm)$ is a Lipschitz differentiability space.
Then for $\mm$-almost every $x_{0}\in\mathbb{R}^{N}$ there is an $n$-dimensional
subspace $V_{x_{0}}$ such that for every $\theta\in(0,\frac{\pi}{2}]$ 
there is a $\varrho=\varrho(x_{0})>0$ satisfying:
\[
\left(\supp \mm\cap B_{r}(x_{0}) \right)\backslash(x_{0}+C(V_{x_{0}},\theta))=\emptyset, \quad \forall r \in (0, \varrho].
\]
\end{lemma}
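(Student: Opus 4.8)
The plan is to exploit the differentiability structure directly. Since $(\supp \mm, |\cdot|_{\R^N}, \mm)$ is a Lipschitz differentiability space, we may work on a single chart $(U, \phi)$ of dimension $n$ with $\phi : U \to \R^n$ Lipschitz; covering $\supp\mm$ by countably many such charts and discarding a null set, it suffices to find $V_{x_0}$ for $\mm$-a.e.\ $x_0$ in a fixed chart. The key observation is that the $N$ coordinate functions $x \mapsto x^{(1)}, \dots, x \mapsto x^{(N)}$ of $\R^N$, restricted to $U$, are Lipschitz, hence differentiable $\mm$-a.e.\ with respect to $\phi$: for $\mm$-a.e.\ $x_0 \in U$ there are covectors $\ell_1, \dots, \ell_N \in \R^n$ with
\[
\lim_{x \to x_0} \frac{\big|x^{(i)} - x_0^{(i)} - \ell_i \cdot (\phi(x) - \phi(x_0))\big|}{|x - x_0|_{\R^N}} = 0, \qquad i = 1, \dots, N.
\]
Stacking these, the linear map $L := (\ell_1, \dots, \ell_N)^T : \R^n \to \R^N$ satisfies $|x - x_0 - L(\phi(x) - \phi(x_0))|_{\R^N} = o(|x - x_0|_{\R^N})$ as $x \to x_0$ in $\supp\mm$. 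I would then set $V_{x_0} := \mathrm{Image}(L)$, which has dimension at most $n$; after a further decomposition into sets where $\dim V_{x_0}$ is constant and (if one wants exactly $n$) an application of David's/Cheeger's identification of the tangent, or simply by enlarging $V_{x_0}$ to an arbitrary $n$-dimensional superspace, one arranges $\dim V_{x_0} = n$. (Enlarging only shrinks the complement of the cone, so the conclusion is preserved.)

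The next step is to convert the first-order expansion into the stated cone containment. Fix such an $x_0$ and $\theta \in (0, \tfrac\pi2]$. Because $\phi$ is $L$-Lipschitz, $|\phi(x) - \phi(x_0)| \le L |x - x_0|_{\R^N}$, so $|L(\phi(x) - \phi(x_0))| \le \|L\|_{\mathrm{op}} L\,|x - x_0|_{\R^N}$; combined with the $o(\cdot)$ estimate this gives, for $x \in \supp\mm$ close to $x_0$,
\[
\big| (x - x_0) - L(\phi(x) - \phi(x_0)) \big|_{\R^N} \le \eps(r)\, |x - x_0|_{\R^N}
\]
with $\eps(r) \to 0$ as $r \to 0$. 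Since $L(\phi(x) - \phi(x_0)) \in V_{x_0}$, the vector $x - x_0$ lies within distance $\eps(r)|x-x_0|_{\R^N}$ of the subspace $V_{x_0}$; elementary geometry of cones shows that a unit vector at distance $\le \eps$ from a linear subspace $V$ lies in $C(V, \arcsin\eps)$. Choosing $\varrho = \varrho(x_0, \theta) > 0$ small enough that $\eps(r) \le \sin\theta$ for all $r \le \varrho$ yields exactly $x \in x_0 + C(V_{x_0}, \theta)$, i.e.\ $\big(\supp\mm \cap B_r(x_0)\big) \setminus (x_0 + C(V_{x_0}, \theta)) = \emptyset$ for $r \le \varrho$. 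Note $V_{x_0}$ is chosen once and for all, independent of $\theta$; only $\varrho$ depends on $\theta$.

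The main technical point to be careful about — and the one I expect to be the real obstacle — is the interplay between the two notions of ``smallness'': the differential in Definition \ref{def:LDS} is controlled in terms of $\sfd(x,y) = |x - y|_{\R^N}$, which is exactly the ambient metric here, so there is in fact no metric-distortion issue in this lemma (unlike in Theorem \ref{thm:tangLDS}, where one must also invoke a bi-Lipschitz embedding). The genuine subtlety is instead bookkeeping: ensuring that a single $V_{x_0}$ works simultaneously for all $\theta$ (handled above by taking the differential data first and only then quantifying over $\theta$), and ensuring $\dim V_{x_0} = n$ rather than merely $\le n$. For the latter one can either cite that in a Lipschitz differentiability space embedded in $\R^N$ the map $L$ is injective $\mm$-a.e.\ (so $\dim V_{x_0} = n$ automatically) — this is essentially David's theorem \cite[Corollary 8.1]{David} — or, since the statement only asserts existence of \emph{an} $n$-dimensional subspace with the containment property, one simply replaces $V_{x_0}$ by any $n$-dimensional subspace containing $\mathrm{Image}(L)$, which exists because $\dim \mathrm{Image}(L) \le n$, and the cone containment for the larger subspace follows a fortiori from $C(V_{x_0}, \theta) \subseteq C(V'_{x_0}, \theta)$ whenever $V_{x_0} \subseteq V'_{x_0}$.
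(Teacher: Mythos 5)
Your proof is correct and follows essentially the same route as the paper: your map $L$ is the transpose of the paper's $D_{x_0}$, so your $V_{x_0}=\mathrm{Image}(L)$ coincides with the paper's $V_{x_0}=(\ker D_{x_0})^{\perp}$, and in both arguments the cone containment comes from the fact that the component of $x-x_0$ orthogonal to $V_{x_0}$ is $o(\|x-x_0\|)$. The paper also settles $\dim V_{x_0}=n$ exactly as in your option (a), via uniqueness of the differential (citing \cite[Lemma 2.1]{BateS} and \cite[Lemma 3.3]{Bate}); your fallback option (b) of enlarging $V_{x_0}$ arbitrarily does prove the lemma as stated, but would discard the canonical identification of $V_{x_0}$ that Corollary \ref{cor:id-is-chart} later relies on, so option (a) is the one to keep.
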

\begin{proof}
Since the coordinate functions are Lipschitz functions, for $\mm$-almost every $x_{0}\in\supp \mm$ there is a unique linear map $de_{k}(x_0):\R^{n}\to\R$ such that
\begin{equation}\label{eq:ekdek}
e_{k}(x)=e_{k}(x_{0})+de_{k}(x_{0})\cdot(\phi(x)-\phi(x_{0}))+o(\|x-x_{0}\|)
\end{equation}
for all $x\in\supp \mm$.
Note that the assignment $f \mapsto df(x_{0})$ is linear whenever two functions are both differentiable at $x_0$. Thus if $x_0$ is a point of differentiability of all coordinate functions then there is a linear map $D_{x_0}:\R^{N}\to\R^{n}$ such that for $\ell=\sum_{k=1}^{N}\lambda_k  \, e_k \in \R^{N}$ it holds
\begin{equation}\label{eq:defDx0}
 D_{x_0}\ell=\sum_{k=1}^{N}\lambda_k \, de_{k}(x_{0}).
\end{equation}

Let $V_{x_{0}}$ be the set of all $v\in\mathbb{R}^{N}$ such that
$\ell(v)=0$ for all $\ell\in\ker D_{x_{0}}$. The claim follows since
for all $\ell \in \ker D_{x_{0}}$ with $\|\ell\|=1$ we must have 
\[
\ell(x-x_{0})=o(\|x-x_{0}\|).
\]

It is easy to see that $V_{x_{0}}$ is at most $n$-dimensional. By
uniqueness of $de_{k}(x_0)$ the case $\dim (V_{x_{0}})<n$ cannot happen, compare also with \cite[Lemma 2.1]{BateS} and \cite[Lemma 3.3]{Bate}.
\end{proof}

\begin{corollary}\label{cor:id-is-chart}
Under the same hypothesis and notations of Lemma \ref {lem:disjoint-away-from-Cone}, 
assume moreover that  for $\mm$-almost every $x_0 \in \supp \mm$ it holds $V_{x_0} = \R^N$. Then $(\supp\mm, |\cdot|_{\R^N}, \mm)$ is a Lipschitz differentiability space w.r.t. the chart $(\supp \mm, \operatorname{id})$. 
\end{corollary}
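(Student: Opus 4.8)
The plan is to show that every Lipschitz function $f\colon\supp\mm\to\R$ is differentiable at $\mm$-a.e.\ point with respect to the candidate chart $(\supp\mm,\operatorname{id})$, with a unique differential. By Definition \ref{def:LDS}, $\supp\mm$ is covered up to an $\mm$-negligible set by the charts $(U_i,\phi_i)$ of its Lipschitz differentiability structure, so it is enough to argue over a single chart $(U,\phi)$, $\phi\colon U\to\R^n$, and then take a countable union. The key observation is that the coordinate functions $e_1,\dots,e_N$ are themselves Lipschitz, hence $\phi$-differentiable $\mm$-a.e., and that packaging their differentials as in the proof of Lemma \ref{lem:disjoint-away-from-Cone} yields the linear map $D_{x_0}\colon\R^N\to\R^n$ of \eqref{eq:defDx0}; the hypothesis $V_{x_0}=\R^N$ will make $D_{x_0}$ invertible, and this is exactly what is needed to transfer differentiability from the chart $\phi$ to $\operatorname{id}$.

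First I would fix a point $x_0$ in the full-measure set where $V_{x_0}=\R^N$ and where $f$, the zero function, and all the $e_k$ are $\phi$-differentiable with unique differential. Reading \eqref{eq:ekdek} componentwise and using that $e_k(x)-e_k(x_0)$ is the $k$-th entry of $x-x_0$, one gets the first-order relation
\[
 x-x_0 \;=\; D_{x_0}^{T}\bigl(\phi(x)-\phi(x_0)\bigr)+o(\|x-x_0\|)\qquad\text{as }\supp\mm\ni x\to x_0,
\]
where $D_{x_0}^{T}\colon\R^n\to\R^N$ is the adjoint of $D_{x_0}$, so that $(D_{x_0}^{T}v)_k=de_k(x_0)\cdot v$. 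From the proof of Lemma \ref{lem:disjoint-away-from-Cone} one has $V_{x_0}=(\ker D_{x_0})^{\perp}=\operatorname{Range}(D_{x_0}^{T})$ and $\dim V_{x_0}=n$; hence $V_{x_0}=\R^N$ forces $\ker D_{x_0}=\{0\}$ and $n=N$, so that $D_{x_0}$, and therefore $D_{x_0}^{T}$, is an invertible endomorphism of $\R^N$. I would then invert the displayed relation to get $\phi(x)-\phi(x_0)=(D_{x_0}^{T})^{-1}(x-x_0)+o(\|x-x_0\|)$.

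The two halves of the differentiability claim now come out by substitution. For existence: denoting by $df_{x_0}^{\phi}\in\R^n$ the $\phi$-differential of $f$ at $x_0$, plugging the inverted relation into $f(x)-f(x_0)=df_{x_0}^{\phi}\cdot(\phi(x)-\phi(x_0))+o(\|x-x_0\|)$ gives $f(x)-f(x_0)=\bigl(D_{x_0}^{-1}df_{x_0}^{\phi}\bigr)\cdot(x-x_0)+o(\|x-x_0\|)$, which is exactly $\operatorname{id}$-differentiability at $x_0$. For uniqueness: if $w\in\R^N$ is the difference of two candidate $\operatorname{id}$-differentials of $f$ at $x_0$, then $w\cdot(x-x_0)=o(\|x-x_0\|)$; feeding in the non-inverted relation gives $(D_{x_0}w)\cdot(\phi(x)-\phi(x_0))=o(\|x-x_0\|)$, i.e.\ $D_{x_0}w$ is a $\phi$-differential of the zero function at $x_0$. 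Since $0$ is obviously also such a differential, uniqueness of $\phi$-differentials forces $D_{x_0}w=0$, hence $w=0$ as $D_{x_0}$ is injective. Taking the union over the charts $(U_i,\phi_i)$ then gives that $f$ is $\operatorname{id}$-differentiable $\mm$-a.e.\ on $\supp\mm$ with unique differential, which is the assertion (and the resulting chart $(\supp\mm,\operatorname{id})$ has dimension $N$, consistently with $n=N$).

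The step I expect to be the real (if modest) obstacle is the uniqueness part: one must check that $\operatorname{id}$ and $\phi$ are equivalent charts \emph{in both directions}, and the clean way to do this is precisely to feed the relation $x-x_0=D_{x_0}^{T}(\phi(x)-\phi(x_0))+o(\|x-x_0\|)$ back into the definition of differentiability applied to the zero function, thereby reducing to the uniqueness of $\phi$-differentials that is already built into the hypothesis that $\phi$ is a chart. Apart from this, the only care needed is bookkeeping: all the error terms are to be read as $x\to x_0$ \emph{within} $\supp\mm$, and one discards at the outset the (finitely many per function, countably many overall) $\mm$-null exceptional sets; there is no analytic difficulty, since the whole argument is first-order linear algebra layered on top of the already-available chart $\phi$.
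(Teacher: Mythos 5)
Your proof is correct and follows essentially the same route as the paper: both establish the first-order relation $\ell\cdot(x-x_0)=D_{x_0}\ell\cdot(\phi(x)-\phi(x_0))+o(\|x-x_0\|)$, use invertibility of $D_{x_0}$ (forced by $V_{x_0}=\R^N$, which also gives $n=N$) to transfer existence of the differential from $\phi$ to $\operatorname{id}$, and then transfer uniqueness back. The only cosmetic difference is in the uniqueness step, where you apply the uniqueness of $\phi$-differentials directly to the zero function, while the paper invokes the equivalent pointwise-Lipschitz-constant criterion of Bate--Speight; these amount to the same thing.
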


\begin{proof}
By \eqref{eq:ekdek} and  by the very definition of $D_{x_{0}}$  given in \eqref{eq:defDx0}, we know that   for all $\ell \in \R^N$ and $\mm$-a.e.  $x_0 \in \supp \mm$ it holds
\begin{equation}\label{eq:ellphi}
\ell \cdot (x-x_0) = D_{x_0} \ell \cdot (\phi(x)-\phi(x_0)) + o(\|x-x_0\|).
\end{equation}
Assume $f$ is a Lipschitz function that is differentiable at $x_0$ w.r.t. to the chart $\phi$ and denote the $\phi$-relative Lipschitz differential by $d^\phi f (x_0)$. 
From the proof of the previous lemma we know that $D_{x_0}:\R^N\to\R^N$ is invertible. Set $df(x_0) := (D_{x_0})^{-1} d^\phi f (x_0)$, by applying \eqref{eq:ellphi} with $\ell= df(x_0)$ we get
\begin{eqnarray}
f(x) - f(x_0) &=& d^\phi f(x_0) \cdot (\phi(x)-\phi(x_0)) + o(\|x-x_0\|)  \nonumber \\
              &=& df(x_0) \cdot (x - x_0) + o(\|x-x_0\|)
\end{eqnarray}
which shows that the Lipschitz differential w.r.t. $(\supp \mm, \operatorname{id})$ exists at $x_{0}$. 
\\We now  show uniqueness  of the differential. The definition of Lipschitz differential implies that 
\[
  f (x_0) = \Lip\left( d^\phi f (x_0) \cdot (\phi(\cdot)-\phi(x_0) \right) (x_0)
\]
whenever $f$ is differentiable at $x_0$. Furthermore, equivalence shown in \cite[Lemma 2.1]{BateS} implies that for all $v \in \R^N$ and $\mm$-almost all $x_0\in \supp \mm$ it holds
\begin{equation}\label{eq:Lipv}
 \Lip \left( v \cdot (\phi(\cdot) -\phi(x_0)) \right) (x_0) > 0. 
\end{equation}
Since $D_{x_0}$ has trivial kernel for $\mm$-almost all $x_0 \in \supp \mm$, by using again \eqref{eq:ellphi} together with \eqref{eq:Lipv}  we see that 
\[
 \Lip \left( \ell \cdot (\cdot -x_0) \right) (x_0) = \Lip \left( D_{x_0} \ell \cdot (\phi(\cdot) -\phi(x_0)) \right) (x_0) > 0, \quad \forall \ell \in \R^{N}. 
\]
The uniqueness of the differential of $f$ at $x_{0}$ w.r.t.   $(\supp \mm, \operatorname{id})$ then follows by  \cite[Lemma 2.1]{BateS}, see also  \cite[Lemma 3.3]{Bate}.
\\We conclude that there is a subset  $\Omega \subset \supp \mm$ of full $\mm$-measure such that, for any $x_0 \in \Omega$, any Lipschitz function is differentiable at $x_0$ w.r.t. the chart  $\phi$ if and only if it is differentiable at $x_0$ w.r.t. to the identity chart $\operatorname{id}$. In particular, $(\supp\mm, |\cdot|_{R^N},\mm)$ is a Lipschitz differentiability space w.r.t.  the $N$-dimensional chart $(\supp \mm, \operatorname{id})$. 
\end{proof}

On the space $\mathcal{S}(\R^{N})$ of non-trivial linear subspaces of $\R^{N}$ we define a complete metric as follows
\[
d(V,W)=\inf \left\{\theta\in\left[0,\frac{\pi}{2}\right]\,\Big|\, W\subset C(V,\theta),V\subset C(W,\theta)\right\}
\]
for $V,W \in \mathcal{S}(\R^{N})$. In particular, for every $V,W\in\mathcal{S}(\R^{N})$ it holds
\[
d(V,W)\le\frac{\pi}{2}
\]
with equality for $W=V^{\bot}$.

Note that such a  metric $d$ can be equivalently defined as the Hausdorff metric of the $(N-1)$-sphere $S^{N-1}$ where each non-trivial linear subspace represents either a closed totally geodesic submanifold of $S^{N-1}$ or two antipodal points.

\begin{lemma}[{\cite[Lemma 15.13]{Mattila95}}]\label{lem:mattila}
Let $V\in\mathcal{S}(\R^{N})$ with $n=\dim (V)<N$ and let
$\theta\in(0,\frac{\pi}{2})$ and $r\in(0,\infty)$. Assume $E\subset\R^{N}$ is a subset satisfying  
\[
E\cap B_{r}(x)\cap(x+C(V^{\bot},\theta))=\{x\}, \quad \forall x \in E.
\]
Then, for any $x_{0}\in E$, the map $P_{V}:E\cap B_{\frac{r}{2}}(x_{0})\to V$
is bi-Lipschitz onto its image, where $P_{V}$ denotes the orthogonal projection
onto $V$. In particular, $E$ is $n$-rectifiable.
\end{lemma}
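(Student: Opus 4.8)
The plan is to deduce this from a single elementary observation: that a vector's membership in the cone $C(V^{\bot},\theta)$ is equivalent to a quantitative \emph{upper} bound on the size of its orthogonal projection onto $V$, so that \emph{avoiding} the cone forces a quantitative \emph{lower} bound on that projection. Concretely, for $v\in\R^{N}$ one has $v\in C(V^{\bot},\theta)$ if and only if $|P_{V^{\bot}}v|\ge\cos\theta\,|v|$, because $\sup_{w\in V^{\bot}\cap\mathbb{S}^{N-1}}v\cdot w=|P_{V^{\bot}}v|$; by the Pythagorean identity $|v|^{2}=|P_{V}v|^{2}+|P_{V^{\bot}}v|^{2}$ this is in turn equivalent to $|P_{V}v|\le\sin\theta\,|v|$. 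Hence $v\notin C(V^{\bot},\theta)$ forces $|P_{V}v|>\sin\theta\,|v|$, and note $\sin\theta\in(0,1)$ since $\theta\in(0,\frac\pi2)$.

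Next I would fix $x_{0}\in E$ and take two distinct points $y,z\in E\cap B_{r/2}(x_{0})$. By the triangle inequality $|y-z|<r$, so $z\in B_{r}(y)$; since $z\in E$ and $z\neq y$, the hypothesis on $E$ gives $z\notin y+C(V^{\bot},\theta)$, i.e. $z-y\notin C(V^{\bot},\theta)$. Applying the cone computation to $v=z-y$ yields
\[
\sin\theta\,|z-y|<|P_{V}(z-y)|=|P_{V}z-P_{V}y|\le|z-y|,
\]
the final inequality holding because $P_{V}$ is $1$-Lipschitz. Thus $P_{V}$ restricted to $E\cap B_{r/2}(x_{0})$ is injective and bi-Lipschitz onto its image with bi-Lipschitz constant $1/\sin\theta$; passing from $B_{r}$ to $B_{r/2}$ in the statement is exactly what guarantees that the two competing points both lie in a single ball of radius $r$ centered at one of them.

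Finally, to get $n$-rectifiability of all of $E$: since $\R^{N}$ is second countable, so is $E$ with the induced topology, hence the open cover $\{B_{r/2}(x)\}_{x\in E}$ of $E$ admits a countable subcover $\{B_{r/2}(x_{j})\}_{j\in\N}$ and $E=\bigcup_{j\in\N}\bigl(E\cap B_{r/2}(x_{j})\bigr)$. Each piece is bi-Lipschitz to a subset of $V$, and $V$ is isometric to $\R^{n}$ because $\dim V=n$; therefore $E$ is $n$-rectifiable. There is no substantial obstacle in this argument — the only point requiring care is orienting the cone inequality correctly (the cone is around $V^{\bot}$, the projection is onto $V$), which is why the relevant estimate is a \emph{lower} bound on $|P_{V}z-P_{V}y|$ rather than an upper one.
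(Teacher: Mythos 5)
Your proof is correct and follows essentially the same route as the paper's: both fix $x_{0}\in E$, use the cone-avoidance hypothesis on pairs of distinct points of $E\cap B_{r/2}(x_{0})$ to extract a quantitative lower bound $\|P_{V}z-P_{V}y\|\ge c(\theta)\|z-y\|$, and conclude that $P_{V}$ is bi-Lipschitz there (the paper passes through the complementary cone $C(V,\tfrac{\pi}{2}-\theta)$ and ends with the constant $\sqrt{\sin\theta}$, while your direct Pythagorean computation gives the cleaner constant $\sin\theta$). You additionally spell out the Lindel\"of covering step needed for the final ``$E$ is $n$-rectifiable'' claim, which the paper leaves implicit.
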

\begin{proof}
Fix $x_{0}\in E$ and let $x,y\in E\cap B_{\frac{r}{2}}(x_{0})$. The condition can be equally stated as  
\[
E\cap B_r(x) \subset x+C\left(V,\frac{\pi}{2}-\theta\right). 
\]
In particular, 
\[
(P_V(x-y))\cdot (x-y) \ge \cos\left(\frac{\pi}{2}-\theta\right)\|x-y\|^{2}.
\]
Since $x-y = P_V(x-y) + w$ for some $w\in V^{\bot}$, it holds
\[
\|P_V(x-y)\|^{2} = (P_V(x-y))\cdot(x-y).
\]
Therefore,
\[
\|P_{V}x-P_{V}y\|^{2}\ge\cos\left(\frac{\pi}{2}-\theta\right)\|x-y\|^{2}. 
\]
Setting $s=\cos(\frac{\pi}{2}-\theta)^{\frac{1}{2}}>0$ it follows that
\[
s\|x-y\|\le\|P_{V}x-P_{V}y\|\le\|x-y\|,
\]
i.e. $P_{V}:E\cap B_{\frac{r}{2}}(x_{0})\to V$ is bi-Lipschitz onto its image.
\end{proof}

\begin{proof}[Proof of Theorem \ref{thm:tangLDS}]
Since by Proposition \ref{prop:LocLDS} we know that   Borel subsets of positive measure in a Lipschitz differentiability space are still Lipschitz differentiability spaces with respect to the induced  metric measure structure, and since by assumption $(X,\sfd,\mm)$ is locally bi-Lipschitz embeddable in euclidean spaces, without loss of generality we can assume that all the space $(X,\sfd)$ is bi-Lipschitz embeddable into some $\R^{N}$ (otherwise just repeat the argument below for each $E_{j}\subset X$ which is by assumption bi-Lipschitz embeddable into some $\R^{N_{j}}$).
Since $\mm$ is an inner regular measure we can invade $U$ by an exhaustion of compact subsets, up to a negligible set. Composing with the bi-Lipschitz embedding into $\R^{N}$, we may even assume that $\mm$ is a finite measure supported on a compact set $K\subset \R^{N}$ . Using Lusin's and Egorov's Theorems we may further assume that the assignment $K\ni x\mapsto V_{x}\subset\R^{N}$ is continuous from the support of $\mm$ to the space  $\mathcal{S}(\R^{N})$ of linear subspaces on $\R^{N}$, where $V_{x}\subset \R^{N}$ is the $n$-dimensional linear subspace given by Lemma \ref{lem:disjoint-away-from-Cone}. 

The continuity of $x\mapsto V_{x}$ at $x_0$ implies that for all $\theta>0$ there is a $\delta=\delta(\theta)>0$ such that for all $x\in B_{\delta}(x_{0})$ 
\[
d(V_{x},V_{x_{0}})<\theta. 
\]
In particular, 
\[
d(V_{x},V_{x_{0}}^{\bot})>\frac{\pi}{2}-\theta
\]
and then by definition of the metric $d$ also
\[
V_{x_{0}}^{\bot}\cap C(V_{x},\theta)=\{\mathbf{0}\}.
\]
By Lemma \ref{lem:disjoint-away-from-Cone} for every $x\in K$ there is
an $r=r(\theta,x)\cap(0,1]$ such that 
\[
K\cap B_{r}(x)\backslash(x+C(V_{x},\theta))=\emptyset.
\]
Thus
\begin{equation}\label{eq:KCVx}
K\cap B_{r}(x)\cap(x+C(V_{x_{0}}^{\bot},\tilde{\theta}))=x
\end{equation}
for some fixed $\tilde{\theta}\in(0,\frac{\pi}{2}-\theta)$. Again
we can decompose $K$ into subsets $\{R_{j}\}_{j\in\mathbb{N}}$ such that
\eqref{eq:KCVx} holds for  $r=\frac{1}{j}$ whenever $x\in R_{j}$, that is  
\[
R_{j}\cap B_{\frac{1}{j}}(x)\cap(x+C(V_{x_{0}}^{\bot},\tilde{\theta}))=x.
\]
Lemma \ref{lem:mattila} finally implies that $R_{j}$ is bi-Lipschitz to a subset in $\R^{n}$, where $n=\dim (V_{x_{0}})$ is equal to the dimension of the original chart $(U,\phi)$. 
\end{proof}

In order to show that $\mm\llcorner R_{j}$ is absolutely continuous with respect to the relevant Hausdorff measure, we will make use of the next result proved by G. Alberti, M. Cs\"ornyei and D. Preiss \cite{ACP}  in two dimension and recently by G. De Philippis and F. Rindler \cite[Theorem 1.14]{DePRin} in higher dimensions.
\begin{theorem}\label{thm:DFR}
Let $\mm$  be a positive Radon measure on $\R^{d}$ such that every Lipschitz  function $f:\R^{d}\to \R$ is differentiable  $\mm$-almost everywhere.  Then $\mm$ is absolutely continuous with respect to the $d$-dimensional Lebesgue measure in $\R^{d}$.
\end{theorem}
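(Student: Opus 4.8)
This is the \emph{weak converse of Rademacher's theorem}: for $d=2$ it is due to \cite{ACP}, and in arbitrary dimension to De Philippis--Rindler \cite[Theorem 1.14]{DePRin}; I sketch the strategy of the latter. First I would reduce to showing that the part $\mm^{s}$ of $\mm$ singular with respect to $\LL^{d}$ vanishes: writing $\mm=\mm^{a}+\mm^{s}$, a $\mm$-null set is $\mm^{s}$-null, so the differentiability hypothesis is inherited by $\mm^{s}$, and it suffices to prove that a \emph{purely singular} positive Radon measure for which every Lipschitz function is differentiable a.e.\ must be zero.

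The plan then rests on three blocks. \emph{(i) Alberti--Marchese \cite{AM}.} To any Radon measure $\mm$ on $\R^{d}$ one attaches the \emph{decomposability bundle}, a $\mm$-measurable field of linear subspaces $x\mapsto V(\mm,x)\subseteq\R^{d}$ such that every Lipschitz $f\colon\R^{d}\to\R$ admits, at $\mm$-a.e.\ $x$, a derivative along $V(\mm,x)$, while there is \emph{one} Lipschitz function whose set of differentiability points is, modulo a $\mm$-null set, contained in $\{x:V(\mm,x)=\R^{d}\}$. Hence the hypothesis that every Lipschitz function is differentiable $\mm$-a.e.\ forces $V(\mm,x)=\R^{d}$ for $\mm$-a.e.\ $x$. \emph{(ii) From the bundle to normal $1$-currents.} When $V(\mm,\cdot)\equiv\R^{d}$ one extracts $d$ ``linearly independent'' Alberti representations of $\mm$, i.e.\ decompositions $\mm=\int\mm^{i}_{\gamma}\,\d\mathbb{Q}_{i}(\gamma)$ into pieces carried by Lipschitz curve fragments with speeds bounded below and whose $\mm$-a.e.\ defined unit tangents $\tau_{1}(x),\dots,\tau_{d}(x)$ span $\R^{d}$; here one uses Bate's structural analysis of Alberti representations \cite{Bate} together with the nondegeneracy built into \cite{AM}. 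Gluing the currents of the fragments of the $i$-th family gives a $1$-current $T_{i}=b_{i}\,\mm$ with $b_{i}$ parallel to $\tau_{i}$ and $|b_{i}|>0$ $\mm$-a.e., which is moreover \emph{normal}: $\partial T_{i}=-\operatorname{div}(b_{i}\mm)$ is a finite Radon measure, its mass controlled by the endpoint contributions of the fragments. \emph{(iii) The structure theorem for $\mathcal{A}$-free measures \cite{DePRin}.} Assemble the $d\times d$ matrix-valued measure $M=B\mm$ whose columns are $b_{1}\mm,\dots,b_{d}\mm$; for the column-wise divergence operator $\mathcal{A}$ one has $\mathcal{A}M=(-\partial T_{1},\dots,-\partial T_{d})$, a finite Radon measure, and its symbol $A\mapsto A^{\!\top}\xi$ has constant rank $d$ for $\xi\neq0$, with wave cone $\Lambda_{\mathcal{A}}=\{A\in\R^{d\times d}:\det A=0\}$. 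The De Philippis--Rindler theorem, in the form that allows $\mathcal{A}M$ to be merely a measure, then yields $\det\big(\tfrac{\d M}{\d|M|}(x)\big)=0$ for $|M|^{s}$-a.e.\ $x$, where $|M|^{s}$ is the part of the total variation $|M|$ singular with respect to $\LL^{d}$.

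To conclude I would note that on the set $G$ of full $\mm$-measure where $\tau_{1}(x),\dots,\tau_{d}(x)$ are linearly independent and the speeds $|b_{i}(x)|$ are positive, the matrix $B(x)$ is invertible, so $\tfrac{\d M}{\d|M|}(x)=B(x)/|B(x)|$ has nonzero determinant, while $\mm\llcorner G$ and $|M|\llcorner G$ are mutually absolutely continuous. If $\mm$ were purely singular and nonzero, then $|M|^{s}(G)>0$, contradicting \emph{(iii)}; hence $\mm^{s}=0$, i.e.\ $\mm\ll\LL^{d}$.

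The main obstacle is clearly block \emph{(iii)}: the De Philippis--Rindler structure theorem for $\mathcal{A}$-free (and ``$\mathcal{A}$-quasi-free'') measures is the genuinely deep input, established via a blow-up analysis, a rigidity statement for measures whose one-directional distributional derivatives in the wave-cone directions are constrained, and harmonic-analytic estimates using the constant-rank condition; I would take it as a black box. On the side relevant to this paper, the delicate point is block \emph{(ii)}: producing honestly \emph{normal} currents (finite boundary mass) with strictly positive speed and linearly independent orientations at $\mm$-a.e.\ point, which is precisely where the theory of Alberti representations and their fragments is needed. In dimension two one may alternatively follow \cite{ACP}, whose argument has a similar structure but bypasses the full PDE machinery.
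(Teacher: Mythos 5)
The paper offers no proof of this statement: it is quoted verbatim as an external input, attributed to Alberti--Cs\"ornyei--Preiss \cite{ACP} for $d=2$ and to De Philippis--Rindler \cite[Theorem 1.14]{DePRin} in general, and is used as a black box in Corollary \ref{cor:abscont}. Your outline is therefore not in competition with anything in the paper; what matters is whether it faithfully reflects the actual argument of \cite{DePRin}, and it does. The reduction to the singular part is correct (since $\mm^{s}\ll\mm$, the universal differentiability hypothesis passes to $\mm^{s}$), the role of the Alberti--Marchese decomposability bundle is stated accurately (the hypothesis forces $V(\mm,x)=\R^{d}$ $\mm$-a.e., by the sharp non-differentiability statement of \cite[Theorem 1.1]{AM}), and the passage from a full bundle to $d$ normal $1$-currents with $\mm$-a.e.\ linearly independent orientations, followed by the structure theorem for measures whose row-wise divergence is a measure (wave cone equal to the singular matrices, hence $\det\frac{\d M}{\d|M|}=0$ on the singular part), is exactly the mechanism of \cite[Corollary 1.12 and Theorem 1.14]{DePRin}. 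Two small remarks: the main structure theorem of \cite{DePRin} does not actually require the constant-rank condition you invoke in block \emph{(iii)} (their Theorem 1.1 holds for general first-order operators, via blow-ups and a rigidity argument), and the construction of the normal currents in block \emph{(ii)} is carried out in \cite{AM} itself rather than in \cite{Bate}, though the two are closely related. Neither affects the correctness of the sketch, and treating the $\mathcal{A}$-free structure theorem as a black box is exactly what the paper does.
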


Let us  mention that the aforementioned statement will also follow by a stronger result announced by  Cs\"ornyei and Jones \cite{CJ}, namely that for every Lebesgue null set $E\subset \R^{d}$ there exists a Lipschitz map $f:\R^{d}\to \R^{d}$ which is no-where differentiable; see also the discussion in the introduction of \cite{AM} for a detailed account of these type of results.  

If we combine Theorem \ref{thm:DFR} with the precise characterization of directions of non-differentiability by Alberti-Marchese \cite[Theorem 1.1]{AM} we get the following Corollary \ref{cor:abscont}.
Before stating it, let us recall the notion of decomposability bundle of a Radon measure $\mu$ introduced in \cite[Definition 2.6]{AM}.

Given a Radon measure $\mu$ over $\R^{d}$, we denote by ${\mathcal F}_{\mu}$ the class of all families $\{\mu_{t}: t \in I\}$ where $I$ is a measure space endowed with a probability measure $dt$ satisfying the following properties:
\begin{itemize}
\item [(a)] each  $\mu_{t}$ is the restriction of $\cH^{1}$ to a $1$-rectifiable set $E_{t}\subset \R^{d}$,
\item [(b)] the map $t \mapsto \mu_{t}(E)$ is measurable for every Borel subset $E \subset \R^{d}$ and $\int_{I} |\mu_{t}|(\R^{d}) \, dt<\infty$, where $|\mu_{t}|$ denotes the total variation measure associated to $\mu_{t}$,
\item [(c)] the measure $\int_{I} \mu_{t} \, dt$ is absolutely continuous with respect to $\mu$.
\end{itemize}
Then we denote by ${\mathcal G}_{\mu}$ the class of all Borel maps  $V: \R^{d} \to {\mathcal S}(\R^{d})$  such that for every $\{\mu_{t}: t \in I\} \in {\mathcal F}_{\mu}$ there holds
$$\Tan(E_{t}, x) \subset V(x), \quad \text{for $\mu_{t}$-a.e. $x$ and $dt$-a.e. $t \in I$}, $$
where $\Tan(E_{t}, x)$ denotes the tangent space of $E_{t}$ at $x$ which exists for $\cH^{1}$-a.e. $x \in E_{t}$ since by assumption $E_{t}$ is $1$-rectifiable.

Since ${\mathcal G}_{\mu}$ is closed under countable intersection (see \cite[Lemma 2.4]{AM}), it admits a $\mu$-minimal
element which is unique modulo equivalence $\mu$-a.e.. With a slight abuse of language
and notation we call any of these minimal elements \emph{decomposability bundle} of $\mu$,
and denote it by $x\mapsto V(x,\mu)$.
\\The motivation to consider the decomposability bundle is that, roughly said,  $V(x,\mu)$ represents the maximal vector space of differentiability of $\mu$ at $x$, see \cite[Theorem 1.1]{AM} for the precise statement.

\begin{corollary}\label{cor:abscont}
Let $\mu$ be a Radon measure on $\R^{d}$ and assume that $(\supp \mu, |\cdot|_{\R^{d}}, \mu)$ is a Lipschitz differentiability space with $d$-dimensional charts. Then every Lipschitz function $f:\R^{d}\to \R$ is differentiable $\mu$-almost everywhere. In particular, $\mu$ is absolutely continuous with respect to the Lebesgue measure on $\R^{d}$.
\end{corollary}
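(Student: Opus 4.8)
The plan is to combine the weak converse of Rademacher's theorem (Theorem \ref{thm:DFR}) with the Alberti--Marchese characterization of the directions of non-differentiability of a Radon measure. Concretely, I first recall that by \cite[Theorem 1.1]{AM}, a Lipschitz function $f:\R^{d}\to\R$ is differentiable $\mu$-almost everywhere \emph{along the decomposability bundle} $x\mapsto V(x,\mu)$, and conversely there exist Lipschitz functions whose differentiability set (in the classical sense) is essentially confined to directions lying in $V(x,\mu)$. Thus, to conclude that \emph{every} Lipschitz $f:\R^{d}\to\R$ is differentiable $\mu$-a.e.\ in the full classical sense, it suffices to show that the decomposability bundle of $\mu$ is the whole space, i.e.\ $V(x,\mu)=\R^{d}$ for $\mu$-a.e.\ $x$.

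The key step is therefore to prove that $V(x,\mu)=\R^{d}$ $\mu$-a.e., and this is where the hypothesis that $(\supp\mu,|\cdot|_{\R^{d}},\mu)$ is a Lipschitz differentiability space \emph{with $d$-dimensional charts} enters. Here I would invoke Lemma \ref{lem:disjoint-away-from-Cone} applied with $n=d$: it produces, for $\mu$-a.e.\ $x_{0}$, a subspace $V_{x_{0}}$ which \emph{a priori} has dimension at most $d$, but by the uniqueness part of the argument in that lemma (the non-degeneracy of the differentials $de_{k}(x_0)$, equivalently the invertibility of $D_{x_0}$ established in the proof), it must be exactly $d$-dimensional, hence $V_{x_{0}}=\R^{d}$. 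I would then note that this $V_{x_0}$ coincides (up to $\mu$-null sets) with, or at least is contained in, the decomposability bundle: indeed, by \cite[Lemma 2.1]{BateS} the Lipschitz differentiability with $d$-dimensional charts means that for $\mu$-a.e.\ $x_0$ and every $v\in\R^{d}$ one has $\Lip(v\cdot(\cdot-x_0))(x_0)>0$ (this is exactly \eqref{eq:Lipv} combined with \eqref{eq:ellphi} as in the proof of Corollary \ref{cor:id-is-chart}), which forces the decomposability bundle to contain every direction $v$, i.e.\ $V(x_0,\mu)=\R^{d}$.

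With $V(\cdot,\mu)\equiv\R^{d}$ in hand, \cite[Theorem 1.1]{AM} gives that every Lipschitz $f:\R^{d}\to\R$ is differentiable $\mu$-a.e.\ (in the classical Euclidean sense), which is the first assertion of the corollary. For the second assertion, I simply feed this into Theorem \ref{thm:DFR}: since $\mu$ is a positive Radon measure on $\R^{d}$ with respect to which every Lipschitz function $\R^{d}\to\R$ is differentiable $\mu$-almost everywhere, Theorem \ref{thm:DFR} yields $\mu\ll\LL^{d}$, the $d$-dimensional Lebesgue measure.

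I expect the main obstacle to be the bookkeeping identification between the subspace $V_{x_0}$ produced by Lemma \ref{lem:disjoint-away-from-Cone} (defined through the annihilator of $\ker D_{x_0}$), the full-differentiability space appearing in the Bate--type estimate \eqref{eq:Lipv}, and the decomposability bundle $V(x,\mu)$ of Alberti--Marchese; these are a priori three different objects and one must check they agree $\mu$-a.e.\ once the chart dimension equals the ambient dimension. Apart from this identification, which is essentially a repackaging of the proof of Corollary \ref{cor:id-is-chart}, the argument is a short two-line deduction from the quoted theorems.
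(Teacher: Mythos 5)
Your overall architecture is the same as the paper's: reduce everything to showing that the Alberti--Marchese decomposability bundle satisfies $V(x,\mu)=\R^{d}$ for $\mu$-a.e.\ $x$, then obtain $\mu$-a.e.\ differentiability of every Lipschitz function from \cite[Theorem 1.1(i)]{AM} and absolute continuity from Theorem \ref{thm:DFR}. The first half of your key step also matches the paper: with $d$-dimensional charts, Lemma \ref{lem:disjoint-away-from-Cone} forces $V_{x_{0}}=\R^{d}$, and hence, via Corollary \ref{cor:id-is-chart}, the identity is itself a $d$-dimensional chart on $\supp\mu$.

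However, the passage from ``$\Lip(v\cdot(\cdot-x_{0}))(x_{0})>0$ for every $v$'' to ``$V(x_{0},\mu)=\R^{d}$'' is a genuine gap, and it is exactly the point you dismissed as bookkeeping. The decomposability bundle is defined through families $\mu_{t}=\cH^{1}\llcorner E_{t}$ with $\int\mu_{t}\,dt\ll\mu$: a direction $v$ belongs to $V(x,\mu)$ only if $\mu$ actually carries $1$-rectifiable mass tangent to $v$ near $x$. Positivity of the local Lipschitz constant of $v\cdot(\cdot-x_{0})$ on $\supp\mu$ only says that the support accumulates at $x_{0}$ inside a cone around $v$; it produces no rectifiable curves charged by $\mu$. (For $\cH^{1}$ restricted to a purely $1$-unrectifiable self-similar set in $\R^{2}$ these local Lipschitz constants are positive in many directions while the decomposability bundle is trivial; such a measure is of course not a Lipschitz differentiability space, which shows precisely that your stated mechanism fails to capture where the differentiability hypothesis is used.) The paper bridges this gap with a nontrivial input: Bate's result \cite[Corollary 5.4]{Bate} that a Lipschitz differentiability space with $d$-dimensional charts admits an Alberti representation in the direction of \emph{any} prescribed cone $C$. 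If $V(\cdot,\mu)$ missed a fixed cone $C$ on a set of positive $\mu$-measure, that representation would give curves tangent to $C$, while \cite[Theorem 1.1(ii)]{AM} produces a Lipschitz function non-differentiable in every direction outside $V(x,\mu)$, hence non-differentiable along those curves --- contradicting Lipschitz differentiability. You need to insert this contradiction argument (or some equivalent use of Bate's Alberti representations) in place of your identification of $V_{x_{0}}$ with $V(x_{0},\mu)$.
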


\begin{proof}

Arguing as in the beginning of the proof of Theorem \ref{thm:tangLDS}, we get that for $\mu$-almost every $x_0\in K=\supp \mm$ the space $V_{x_0}$ has dimension $d$ so that by Corollary \ref{cor:id-is-chart} the identity on $K$ is a $d$-dimensional chart. 
 
Assume $\mu$ is of the form
\[
\mu = \int \mu_\omega\, d\mathbb{P}(\omega)
\]
such that for $\mathbb{P}$-almost all $\omega$ the measure $\mu_\omega$ is supported on a  $1$-rectifiable curve $\gamma_\omega$. Such a disintegration-type formula is called \emph{Alberti representation} of $\mu$, see \cite[Definition 2.2]{Bate}. 
 Combining \cite[Theorem 1.1]{AM}  with  Theorem \ref{thm:DFR}, it follows that in order to get our thesis it is enough to show that 
\begin{equation}\label{eq:VxuRd}
V(x,\mu)=\R^d, \quad \text{ for }\mu\text{-a.e. } x,
\end{equation}
where $x\mapsto V(x,\mu)$ is the  decomposability bundle of $\mu$ defined above.
Indeed, if  \eqref{eq:VxuRd} holds, then every Lipschitz function is differentiable $\mu$-a.e. by \cite[Theorem 1.1(i)]{AM}, but then in view of Theorem \ref{thm:DFR} we get that $\mu$ is  absolutely continuous with respect to the Lebesgue measure on $\R^{d}$.

Assume by contradiction that \eqref{eq:VxuRd} is violated. Then restricting $K$, we may assume that $\mu(K)>0$ and there is a fixed cone $C\subset \R^{d}$ that is disjoint from $V(x,\mu)$ for all $x\in K$. Since by assumption $(K, |\cdot|_{\R^{d}}, \mu)$ is a Lipschitz differentiability space with $d$-dimensional charts, Bate \cite[Corollary 5.4]{Bate} showed that there is an Alberti representation in the direction of any cone $C'\subset \R^{d}$. Choosing  in particular $C'=C$, we get that for $\mu$-a.e. $x\in K$ there is a curve $\gamma_x$ that is differentiable at $\gamma^{-1}_x(x)$ and is tangent to $C$.  

However, by \cite[Theorem 1.1(ii)]{AM} there exists a Lipschitz function that is not differentiable in any direction $v\notin V(x,\mu)$ for $\mu$-a.e. $x \in K$. In particular, such a function is not differentiable along the curves $\gamma_x$ for $\mu$-a.e. $x \in K$. Since $\mu(K)>0$,  this contradicts the assumption that $(K, |\cdot|_{\R^{d}}, \mu)$  is a Lipschitz differentiability space. Thus \eqref{eq:VxuRd} holds and the proof is complete.
\end{proof}

Finally, recall that an $n$-rectifiable measure in $\R^N$ that is absolutely continuous with respect to the $n$-dimensional Hausdorff measure has almost everywhere unique linear tangents that are $n$-dimensional linear subspaces of $\R^N$, see \cite[Theorem 15.19]{Mattila95}. Thus we may summarize the content of the section in the next statement. 

\begin{theorem}\label{cor:tangLDS}
Let  $(X,\sfd,\mm)$ be a Lipschitz differentiability space and assume it is locally bi-Lipschitz embeddable in euclidean spaces, with charts $F_{k}:E_{k}\to \R^{N_{k}}$ bi-Lipschitz onto their image and $\mm(X \setminus \bigcup_{k \in \N} E_{k})=0$. Then there exists a countable collection $\{R_{j}\}_{j \in \N}$ of $\mm$-measurable subsets of $X$, covering $X$ up  to an $\mm$-negligible set, such that each $R_{j}$ is bi-Lipschitz to a measurable subset of $\R^{n_j}$ and $\mm \llcorner R_j$ is absolutely continuous with respect to the $n_j$-dimensional Hausdorff measure $\cH^{n_{j}}$.  In other words, $X$ is rectifiable as \emph{metric measure space}.
In addition, for $\mm$-almost every $x \in R_j \cap E_{k}$, the set $F_{k}(E_{k})$ has a  unique tangent at $F_{k}(x)$ that is an $n_j$-dimensional linear subspace of $\R^{N_{k}}$.
\end{theorem}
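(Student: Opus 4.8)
The plan is to assemble Theorem \ref{cor:tangLDS} from the pieces established earlier in the section, reducing the metric measure space statement to the euclidean statements of Theorem \ref{thm:tangLDS} and Corollary \ref{cor:abscont}. First I would use the local bi-Lipschitz embeddability to reduce to the case of a single chart in a single euclidean space: by Proposition \ref{prop:LocLDS} each $E_k$, endowed with the induced metric measure structure, is a Lipschitz differentiability space which is globally bi-Lipschitz embeddable in $\R^{N_k}$; since the $E_k$ cover $X$ up to an $\mm$-negligible set, and since absolute continuity with respect to a Hausdorff measure and bi-Lipschitz rectifiability are both properties stable under countable unions and preserved (in the appropriate dimension) by bi-Lipschitz maps, it suffices to prove the statement for each $E_k$ separately. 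So fix $k$ and push forward via $F_k$; we may assume $X = \supp \mm$ is a compact subset $K$ of some $\R^N$, $\mm$ is a finite Radon measure on $K$, and $(K, |\cdot|_{\R^N}, \mm)$ is a Lipschitz differentiability space. By Proposition \ref{prop:LocLDS} again we may restrict attention to a single $n$-dimensional chart $(U,\phi)$ and decompose $U$ accordingly; a countable collection of such chart pieces covers $K$ up to a null set.

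Next I would invoke Theorem \ref{thm:tangLDS} to obtain, inside such a chart $(U,\phi)$, a countable decomposition into $\mm$-measurable subsets $\{R_j\}$ each bi-Lipschitz to a measurable subset of $\R^n$, where $n$ is the chart dimension; concretely, the proof of that theorem produces $R_j$ on which the orthogonal projection $P_{V_{x_0}}$ onto a fixed $n$-plane is bi-Lipschitz onto its image. The remaining task is to upgrade this to absolute continuity of $\mm\llcorner R_j$ with respect to $\cH^n$. For this I would transport the problem into $\R^n$ via the bi-Lipschitz projection map $P := P_{V_{x_0}}|_{R_j}$: set $\mu := P_\sharp(\mm\llcorner R_j)$, a finite Radon measure on $\R^n$ with $\supp\mu = P(R_j)$. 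Since $P$ is bi-Lipschitz onto its image, $(\supp\mu, |\cdot|_{\R^n}, \mu)$ is again a Lipschitz differentiability space (its charts being the pushforwards of the charts on $R_j$), and its chart dimension is still $n$ — this is the dimension-matching point, that the chart on $R_j$ has the same dimension $n$ as the ambient $\R^n$ into which we projected, which is exactly the conclusion $n = \dim V_{x_0}$ recorded at the end of the proof of Theorem \ref{thm:tangLDS}. Then Corollary \ref{cor:abscont} applies to $\mu$: every Lipschitz function $\R^n\to\R$ is differentiable $\mu$-a.e., hence $\mu \ll \cL^n$ on $\R^n$. Pulling back by the bi-Lipschitz map $P$ (under which $\cH^n$ and $\cL^n$ are mutually absolutely continuous on the relevant sets, with bounded Radon-Nikodym derivatives) gives $\mm\llcorner R_j \ll \cH^n$, which is the desired claim. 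Re-indexing the doubly (or triply) countable family $\{R_j\}$ over all $E_k$ and all charts yields the stated global collection covering $X$ up to an $\mm$-null set.

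Finally, for the addendum about unique linear tangents, I would argue as follows. Having shown that $\mm\llcorner R_j$ is $n$-rectifiable (it is bi-Lipschitz to a subset of $\R^n$) and absolutely continuous with respect to $\cH^n$, the pushforward $(F_k)_\sharp(\mm\llcorner(R_j\cap E_k))$ is an $n$-rectifiable Radon measure on $\R^{N_k}$ absolutely continuous with respect to $\cH^n$; by the classical rectifiability theory (e.g. \cite[Theorem 15.19]{Mattila95}) such a measure is carried by a countable union of $C^1$ (or Lipschitz) $n$-graphs and has, at $\cH^n$-a.e. — hence at $(F_k)_\sharp\mm$-a.e. — point a unique approximate tangent plane which is an $n$-dimensional linear subspace of $\R^{N_k}$. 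Translating back through $F_k$, this says that for $\mm$-a.e. $x\in R_j\cap E_k$ the set $F_k(E_k)$ has a unique tangent at $F_k(x)$ that is an $n_j$-dimensional linear subspace of $\R^{N_k}$, as asserted.

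The main obstacle I anticipate is purely bookkeeping rather than conceptual: one must be careful that the chart dimension is genuinely preserved under both the embeddings $F_k$ and the bi-Lipschitz projections $P_{V_{x_0}}$, so that the dimension $n$ appearing in Corollary \ref{cor:abscont} (the ambient dimension, equal to the chart dimension by the hypothesis of that corollary) really does coincide with the dimension $n_j$ of the rectifiability chart and with the exponent of the Hausdorff measure. The proof of Theorem \ref{thm:tangLDS} already contains the key identity $n = \dim V_{x_0}$ equal to the dimension of the original chart, and Corollary \ref{cor:id-is-chart} shows that after projecting onto $V_{x_0}$ the identity is an $n$-dimensional chart, so all the dimensions are forced to agree; the remaining work is to state this chain of equalities cleanly and to check the elementary fact that bi-Lipschitz images of Lipschitz differentiability spaces are Lipschitz differentiability spaces with the transported chart structure of the same dimension, which follows directly from Definition \ref{def:LDS} by composing the differentiability quotient \eqref{eq:Differential} with the bi-Lipschitz map and its inverse.
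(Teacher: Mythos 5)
Your proposal is correct and follows essentially the same route as the paper: obtain the charts $R_j$ from Theorem \ref{thm:tangLDS}, push $\mm\llcorner R_j$ forward to $\R^{n_j}$ by the bi-Lipschitz map, observe that the pushforward is a Lipschitz differentiability space with $n_j$-dimensional charts so that Corollary \ref{cor:abscont} yields absolute continuity with respect to Lebesgue measure, pull back to get $\mm\llcorner R_j\ll\cH^{n_j}$, and deduce the unique-tangent addendum from \cite[Theorem 15.19]{Mattila95}. Your extra care in tracking that the chart dimension equals the ambient dimension after projection (via $n=\dim V_{x_0}$ and Corollary \ref{cor:id-is-chart}) is exactly the point the paper's proof relies on implicitly.
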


\begin{proof}
The first part of the statement corresponds to Theorem \ref{thm:tangLDS}; denote by $\Phi_{j}:R_{j}\to \R^{n_{j}}$ such bi-Lipschitz embeddings.  In order to show that  $\mm \llcorner R_j$ is absolutely continuous with respect to $\cH^{k_{j}}$, we first observe that  the property of being a Lipschitz differentiability space is invariant under composition with bi-Lipschitz maps, so that $(\supp ({\Phi_{j}}_{\sharp} (\mm \llcorner R_{j})), |\cdot|_{\R^{n_{j}}}, {\Phi_{j}}_{\sharp} (\mm \llcorner R_{j}))$ is a Lipschitz differentiability space with charts in $\R^{n_{j}}$. At  this point   we apply Corollary \ref{cor:abscont} and infer that  ${\Phi_{j}}_{\sharp} (\mm \llcorner R_{j})$  is absolutely continuous with respect to the  Lebesgue measure on $\R^{n_{j}}$ and therefore $\mm \llcorner R_{j}$ is absolutely continuous with respect to the $n_j$-dimensional Hausdorff measure $\cH^{n_{j}}$, since $\Phi_{j}$ is bi-Lipschitz. The uniqueness of tangent spaces follows then by \cite[Theorem 15.19]{Mattila95}.
\end{proof}

\section{Proof of Theorem \ref{thm:main}}
Let $(X,\sfd,\mm)$ be an $\RCD^{*}(K,N)$-space for some $K\in \R$ and $N\in (1,\infty)$. From Theorem \ref{thm:RCDLDS} we know that $(X,\sfd,\mm)$ is a Lipschitz differentiability space, namely there are countably many charts $\{(U_{i},\phi_{i})\}_{i\in \N}$ with  $\mm(U_{i})>0$,  $X=\cup_{i \in N} U_{i}$  and  $\phi_{i}:U_{i} \to \R^{n_{i}}$ Lipschitz maps with respect to which any Lipschitz function is differentiable almost
everywhere (see Definition \ref{def:LDS} for more details).

On the other hand, from Theorem \ref{RCD-rect} we know that there exists a countable collection $\{R_{j}\}_{j \in \N}$ of $\mm$-measurable subsets of $X$, covering $X$ up to an $\mm$-negligible set, such that each $R_{j}$ is bi-Lipschitz to a measurable subset of $\R^{k_{j}}$, for some $k_{j}\in \N$. A natural question is what is the relation between the dimensions $n_{i}$ and the  dimensions $k_{j}$, the first ones given by the Lipschitz differentiable structure and the second ones given by the rectifiability result. As observed by F. Cavalletti and T. Rajala \cite{CR}, the two agree  $\mm$-almost everywhere. More precisely the following  proposition holds true. 
\begin{proposition}\label{prop:ni=kj}
Let $(X,\sfd,\mm)$ be an $\RCD^{*}(K,N)$-space for some $K\in \R$ and $N\in (1,\infty)$ and let 
\begin{enumerate}
\item $\{(U_{i},\phi_{i})\}_{i\in \N}$, $\phi_{i}:U_{i} \to \R^{n_{i}}$ Lipschitz maps,  be the Lipschitz differentiable structure given by Theorem \ref{thm:RCDLDS} and Definition \ref{def:LDS},
\item $\{R_{j}\}_{j \in \N}$ collection of  $\mm$-measurable subsets of $X$, covering $X$ up to an $\mm$-negligible set, such that each $R_{j}$ is bi-Lipschitz to a measurable subset of $\R^{k_{j}}$ and for $\mm$-a.e. $x \in X$ the tangent space is unique and isometric to $\R^{k_{j}}$; i.e. the rectifiable structure given by Theorem \ref{RCD-rect}.
\end{enumerate}
If $\mm(U_{i}\cap R_{j})>0$, then  it holds $n_{i}=k_{j}$ and $\mm \llcorner (U_{i}\cap R_{j})$ is absolutely continuous with respect to the $k_{j}$-dimensional Hausdorff measure.
\end{proposition}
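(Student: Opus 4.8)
The plan is to localise the whole question to the set $A:=U_i\cap R_j$ (which we may assume has positive mass) and to compare, at $\mm$-typical points of $A$, the two notions of tangent available to us: the intrinsic Gromov--Hausdorff tangent of $X$, pinned down by Theorem \ref{RCD-rect}, and the ``analytic'' tangent coming from the Lipschitz differentiable structure, controlled by Theorem \ref{cor:tangLDS}. First I would observe that, by Proposition \ref{prop:LocLDS}, $(A,\sfd|_{A\times A},\mm\llcorner A)$ is a Lipschitz differentiability space covered by the single $n_i$-dimensional chart $(A,\phi_i|_A)$; moreover, since by Theorem \ref{RCD-rect} the set $R_j$ is bi-Lipschitz to a measurable subset of $\R^{k_j}$, restricting the corresponding map yields a bi-Lipschitz embedding $\Phi\colon A\to\R^{k_j}$, so that $A$ is a Lipschitz differentiability space globally bi-Lipschitz embeddable in a Euclidean space. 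Applying Theorem \ref{cor:tangLDS} to $A$ (whose only chart has dimension $n_i$, so that the pieces produced by the construction of Theorem \ref{thm:tangLDS} are bi-Lipschitz to subsets of $\R^{n_i}$) gives a countable Borel partition $\{P_\ell\}$ of $A$, modulo an $\mm$-negligible set, with each $P_\ell$ bi-Lipschitz to a measurable subset of $\R^{n_i}$ and $\mm\llcorner P_\ell\ll\cH^{n_i}$; summing, $\mm\llcorner A\ll\cH^{n_i}$. In addition, the last assertion of Theorem \ref{cor:tangLDS} applied with the embedding $\Phi$ tells us that for $\mm$-a.e.\ $x\in A$ the set $\Phi(A)\subseteq\R^{k_j}$ has a unique tangent at $\Phi(x)$, and this tangent is an $n_i$-dimensional linear subspace of $\R^{k_j}$.

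Next I would transfer this back to the intrinsic tangents of $X$. Since $\Phi$ is bi-Lipschitz, the previous sentence forces, for $\mm$-a.e.\ $x\in A$, every element of $\Tan(A,x)$ to be bi-Lipschitz equivalent to $\R^{n_i}$, hence to have Hausdorff dimension $n_i$. On the other hand, by the Lebesgue density theorem in the locally doubling space $(X,\sfd,\mm)$, $\mm$-a.e.\ point $x\in A$ is a point of density one of $A$; at such a point the Bishop--Gromov inequality guarantees that $A$ becomes metrically dense in the balls $B_r(x)$ as $r\downarrow0$ (a ball of radius $\eps r$ inside $B_r(x)$ and disjoint from $A$ would carry a definite fraction of $\mm(B_r(x))$), so that the rescalings $(A,r^{-1}\sfd|_{A},x)$ and $(X,r^{-1}\sfd,x)$ share the same pointed Gromov--Hausdorff limits, i.e.\ $\Tan(A,x)=\Tan(X,x)$. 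Combining the two, for $\mm$-a.e.\ $x\in A$ every element of $\Tan(X,x)$ has Hausdorff dimension $n_i$.

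To conclude I would invoke Theorem \ref{RCD-rect} once more: since $A\subseteq R_j$, for $\mm$-a.e.\ $x\in R_j$, hence for $\mm$-a.e.\ $x\in A$, the tangent $\Tan(X,x)$ is unique and isometric to $\R^{k_j}$, a space of Hausdorff dimension $k_j$. As $\mm(A)>0$, there exist points $x\in A$ at which both this and the conclusion of the previous paragraph hold, which forces $n_i=k_j$. Feeding this equality back into the absolute continuity obtained in the first step gives $\mm\llcorner(U_i\cap R_j)=\mm\llcorner A\ll\cH^{n_i}=\cH^{k_j}$, completing the argument.

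The hard part will be the identification $\Tan(A,x)=\Tan(X,x)$ at $\mm$-density-one points of $A$: this ``no loss of metric information under blow-up along a full-measure subset'' is false for general locally doubling spaces, and it is exactly here that the geometry of $\RCD^{*}(K,N)$-spaces — concretely, the Bishop--Gromov volume growth estimate — enters in an essential way. The remaining ingredients (bi-Lipschitz invariance of Hausdorff dimension, the behaviour of Gromov--Hausdorff blow-ups under bi-Lipschitz charts, and the Lebesgue density theorem in doubling spaces) are routine and can be quoted.
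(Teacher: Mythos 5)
Your argument is correct and follows essentially the same route as the paper: localise to $U_i\cap R_j$, note via Proposition \ref{prop:LocLDS} that it is a Lipschitz differentiability space with a single $n_i$-dimensional chart which is bi-Lipschitz embeddable in $\R^{k_j}$, apply Theorem \ref{cor:tangLDS} to get both $\mm\llcorner(U_i\cap R_j)\ll\cH^{n_i}$ and an $n_i$-dimensional tangent at $\mm$-a.e.\ point, and compare with the unique tangent $\R^{k_j}$ from Theorem \ref{RCD-rect} to force $n_i=k_j$. The only difference is that you spell out two steps the paper leaves implicit --- that the bi-Lipschitz transfer preserves only Hausdorff dimension rather than isometry type, and that $\Tan(U_i\cap R_j,x)=\Tan(X,x)$ at $\mm$-density-one points via doubling --- which is a welcome clarification rather than a deviation.
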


\begin{proof}
If $\mm(U_{i}\cap R_{j})>0$, by Proposition \ref{prop:LocLDS}, we know that $(U_{i}\cap R_{j}, \sfd|_{U_{i}\cap R_{j}\times U_{i}\cap R_{j}}, \mm\llcorner U_{i}\cap R_{j})$ is a Lipschitz differentiability space admitting a bi-Lipschitz embedding $\Phi_{j}$ into $\R^{k_{j}}$. Therefore, Theorem \ref{cor:tangLDS}  implies that  $U_{i}\cap R_{j}$ admits a unique tangent space at $\mm$-a.e.  $x \in U_{i}\cap R_{j}$ which is isometric to $\R^{n_{i}}$, $n_{i}$ being the dimension of the Lipschitz chart $\phi_{i}$. But,  on the other hand by (2),  for $\mm$-a.e. $x \in U_{i}\cap R_{j}$ the tangent space is unique and isometric to $\R^{k_{j}}$. It clearly  follows that $n_{i}=k_{j}$. 
\\Finally, again by Theorem   \ref{cor:tangLDS}, we know  that  $\mm\llcorner (U_{i}\cap R_{j})$ is absolutely continuous with respect to the $k_j$-dimensional Hausdorff measure $\cH^{k_{j}}$.
 \end{proof}

Since $(U_i)_{i\in\N}$ covers $X$ up to an $\mm$-negligible set and $\mm \llcorner (U_{i}\cap R_{j})$ is absolutely continuous with respect to the $k_j$-dimensional Hausdorff measure, we infer that the same holds for $\mm \llcorner R_j$ so that we can conclude the proof of Theorem \ref{thm:main}.

\begin{remark}[An alternative proof of Theorem \ref{thm:main}]
We decided to give a proof  of Theorem \ref{thm:main}  as self-contained as possibile but we wish to mention that by using more heavily the work of Bate \cite{Bate} and David \cite{David},  it is possibile to  give an alternative argument  which avoids the rectifiability Theorem \ref{cor:tangLDS}. Indeed the fact that  $n_{i}=k_{j}$ in Proposition \ref{prop:ni=kj} can be showed by using solely the uniqueness of tangent spaces to Lipschitz differentiability spaces embeddable in $\R^{d}$  proved in  \cite[Theorem 6.6]{Bate} with a different argument than ours.
Once we know that the dimension of the bi-Lipschitz charts given by Theorem  \ref{RCD-rect} agree with the dimension as Lipschitz differentiable space, we are reduced to prove the following statement:  if  $\mu$ is a positive finite measure supported on an compact subset of $\R^{n}$ such that $(\supp \mu, |\cdot|_{\R^{n}}, \mu)$ is a Lipschitz differentiability space with a unique Lipschitz chart with values in $\R^{n}$, then $\mu$ is absolutely continuous with respect to the Lebesgue measure of $\R^{n}$.
Such a statement can be showed as follows: by the work of Bate  \cite[Theorem 6.6]{Bate} the measure $\mu$ admits $n$-independent Alberti representantions.   But if a positive Radon measure in $\R^{n}$ admits $n$-independent Alberti representations then, by the recent work of De Philippis-Rindler \cite[Corollary 1.12]{DePRin},  it must be absolutely continuous with respect to the Lebesgue measure of $\R^{n}$, as desired.  
\end{remark}

\end{document}